\newcommand{\varep}{\varepsilon}
\newcommand{\norm} [1]{\left\| {#1}\right\|}
\newcommand{\zz} {\mathbf {z}}
\newcommand{\vv} {\mathbf {v}}
\newcommand{\uu} {\mathbf {u}}
\newcommand{\s} {\mathbf {s}}
\newcommand{\ddt}{\frac d{dt} }
\def\beq{\begin{equation}}
\def\eeq{\end{equation}} 
\def\beqs{\begin{equation*}}
\def\eeqs{\end{equation*}}
\def\bals{\begin{align*}}
\def\eals{\end{align*}}
\def\bspl{\begin{split}}
\def\espl{\end{split}}
\def\myclearpage{}
\title{Analysis of expanded mixed finite element methods for the generalized Forchheimer equations}
\author{
 Thinh T. Kieu\thanks{Department of Mathematics, University of North Georgia, Gainesville Campus, 3820 Mundy Mill Rd., Oakwood, GA 30566, U.S.A. ({\tt thinh.kieu@ung.edu}).}}
\begin{document}

\maketitle
            
\begin{abstract} The nonlinear Forchheimer equations are used to describe the dynamics of fluid flows in porous media when Darcy's law is not applicable. In this article, we consider the generalized Forchheimer flows for slightly compressible fluids, and then study the expanded mixed finite element method applied to the initial boundary value problem for the resulting degenerate parabolic equation for pressure. The bounds for the solutions, time derivative and gradient of solutions are established. Utilizing the monotonicity properties of Forchheimer equation and boundedness of solutions, a {\it priori } error estimates for solution are obtained in $L^2$-norm, $L^\infty$-norm as well as for its gradient in $L^{2-a}$-norm for all $a\in (0,1)$. Optimal $L^2$-error estimates are shown for solutions under some additional regularity assumptions. Numerical results using the lowest order Raviart-Thomas mixed element confirm the theoretical analysis regarding convergence rates.     
\end{abstract}
            
            
            
\begin{keywords}
Error estimates, expanded mixed finite element, nonlinear degenerate parabolic equations, generalized Forchheimer equations, porous media.
\end{keywords}

\begin{AMS}
65M12, 65M15, 65M60, 35Q35, 76S05.
\end{AMS}

\pagestyle{myheadings}
\thispagestyle{plain}
\markboth{Thinh T. Kieu}{Expanded mixed finite element method for the generalized Forchheimer flows}
            
            
\myclearpage    
\section {Introduction }

Fluid flow in porous media is a great interest in many areas of
reservoir engineering, such as petroleum, environmental and groundwater
hydrology. Description of fluid flow behavior accurately in the porous media is
essential to the successful design and operation of projects in these areas. Most of  study of fluid flow in porous media are based on Darcy's law. By this law, the pressure gradient $\nabla p$ is linearly proportional to the fluid velocity $\uu$ in the porous media which writes as 
$\alpha \uu = -\nabla p$ with empirical constant $\alpha$. However  Dupuit, a Darcy's  student, observed on the field data that this linear relation is no
longer valid for flows owning high velocity. A nonlinear relationship between velocity and gradient of pressure is introduced by adding the higher order term of velocity to the Darcy's law. It is known as Forchheimer laws. Engineers widely use the three following Forchcheimmer's laws (cf. \cite{ ForchheimerBook}) to match experimental observation: 
\beqs
\alpha \uu +\beta |\uu| \uu= -\nabla p, \quad \alpha \uu +\beta |\uu| \uu+\lambda |\uu|^2 \uu= -\nabla p,\quad \alpha \uu +\lambda_m |\uu|^{m-1} \uu= -\nabla p,
\eeqs
where $\alpha, \beta, \lambda, m,\lambda_m$ are empirical constants.  

Since then, there is a large number of research on these equations and their variations, the  Brinkman-Forchheimer equations for incompressible fluids (cf. \cite{CKU2005,CKU2006,ChadamQin,Franchi2003,Straughan2013,Payne1999a,Payne1999b,Qin1998}, see also \cite{Straughanbook}). Recently, study on slightly compressible fluid flows subject to generalized Forchheimer equations are in \cite{ABHI1,HI1,HI2} and later in \cite{HIKS1,HK, HKP1}. These are devoted to theory of existence, stability and qualitative property of solutions. The study of numerical methods for degenerate parabolic equations are still not analyzed as much as those of theory.  

The popular numerical methods for modeling flow in porous media are the mixed finite element approximations in \cite {DW93, GW08,  KP99, EJP05} and block-centered finite difference method in \cite{HPH12} because these inherit conservation properties and produce the accurate flux (see \cite{ELPV96}). 

In \cite {ATWZ96}  Arbogast, Wheeler and Zhang first analyzed mixed finite element approximations of degenerate parabolic equation arising in flow in porous media. Not so long later Arbogast, Wheeler and Yotov in \cite {ATWY97} showed that the standard mixed finite element method not suitable for problems with small tensor coefficients as we need to invert the tensor. The proposed approach reduces original Forchheimer type equation to generalized Darcy equation with conductivity tensor $K$ degenerating as gradient of the pressure convergence to infinity. At the same time, the standard mixed variational formulation requires inverting $K$ to find gradient of pressure.    

Woodward and Dawson in \cite{ WCD00} study of expanded mixed finite element methods for a nonlinear parabolic equation modeling flow into variably saturated porous media. In their analysis, the Kirchhoff transformation is used to move the nonlinearity from coefficient $K$ to the gradient and thus simplifies analysis of the equations. This transformation does not applicable for our system \eqref{lin-p}.  

In this paper,  we combine techniques developed in \cite {HI1,HI2} and the expanded mixed finite element method as in \cite {ATWY97} to utilize both the special structures of equation as well as the advantages of the expanded mixed finite element method in obtaining the optimal order error estimates for the solution in several norms of interest. 

 The paper is organized as follows:  
  In \S 2 we introduce the generalized formulation of the Forchheimer’s laws for slightly compressible fluids, recall the relevant results from \cite{ABHI1, HI1} and preliminary results.   
 In \S 3 we consider the expanded mixed formulation and standard results for mixed finite element approximations. A implicit backward difference time discretization of the semidiscrete  scheme is proposed to solve the system \eqref{semidiscreteform}. 
 In \S 4 we derive many bounds for solutions to \eqref{weakform} and \eqref{semidiscreteform} in Lebesgue norms. 
 In \S 5 we analyze two version of a mixed finite element approximation, a semidiscrete version and a fully discrete version. The {\it priori} error estimates for the three relevant variables in $L^2$-norms, $L^\infty$-norm are established. Under suitable assumptions on the regularity of solutions, we prove the superconvergence. In \S 6, we provide a numerical example using the lowest Raviart-Thomas mixed finite element. The results support our theoretical analysis regarding convergence rates.  

\section{Mathematical preliminaries and auxiliaries}
We consider a fluid in a porous medium in a bounded domain $\Omega\subset \mathbb R^d, d\ge 2$. Its boundary $\Gamma=\partial \Omega$ belongs to $C^2$.  Let $x\in \mathbb{R}^d,$ $0<T<\infty,$  $t\in (0,T]$ be the spatial and time variable.   
     
A general Forchheimer equation, which is studied in \cite{ABHI1,HI1,HIKS1, HKP1} has the form 
 \beq\label{eq1}
g(|\uu|)\uu=-\nabla p,
\eeq
where $g(s)\geq 0$ is a function defined on $[0,\infty)$. When $g(s)=\alpha, \alpha+\beta s, \alpha+\beta s+\gamma s^2, \alpha +\gamma_m s^{m-1}, $ where $\alpha, \beta, \gamma, m,\gamma_m$ are empirical constants, we have Darcy's law, Forchheimer's two term, three term and power laws, respectively.
The function $g$ in \eqref{eq1} is a polynomial with non-negative coefficients as the form
\beq\label{eq2}
g(s)=a_0s^{\alpha_0} + a_1s^{\alpha_1}+\cdots +a_Ns^{\alpha_N},~~s\geq 0, 
\eeq 
where $N\geq 1,\alpha_0=0<\alpha_1<\ldots<\alpha_N$ are fixed number, the coefficients $a_0, \ldots, a_N$ are non-negative numbers with $a_0>0, a_N>0$. 
The number $\alpha_N$ is the degree of $g$ is denoted by ${\rm deg}(g)$. 

The monotonicity of the nonlinear term and the nondegeneracy of the Darcy's parts in \eqref{eq1} enable  us to write $\uu$ implicit in terms of $\nabla p$ and derivative of a nonlinear Darcy equation: 
\beq\label{eq3}
\uu=-K(|\nabla p|)\nabla p. 
\eeq 
The function $K: \mathbb{R}^+\rightarrow\mathbb{R}^+$ is defined by
\beq\label{eq4}
K(\xi)=\frac{1}{g(s(\xi))} \text{~where~} s=s(\xi)\geq 0 \text{~satisfies~} sg(s)=\xi, \text{~~for~}\xi\geq 0.
\eeq
 The state equation, which relates the density $\rho(x,t)>0$ with pressure $p$, for slightly compressible fluids is 
   \beq\label{eq6}
   \frac{d\rho}{dp}=\kappa^{-1}\rho \text{~or~} \rho(p)=\rho_0\exp(\frac{p-p_0}{\kappa}),\quad \kappa>0.
   \eeq
Other equations govering the fluid's motion are the equation of continuity:
\beqs
\frac{d \rho}{d t} +\nabla\cdot(\rho \uu)=0,
\eeqs 
which yields 
\beq\label{eq5}
\frac {d\rho}{dp} \frac{d p}{d t} +\rho\nabla\cdot \uu + \frac {d\rho}{dp}\uu\cdot \nabla p=0. 
\eeq
Combining \eqref{eq5} and \eqref{eq6}, we find that
   \beq\label{eq7}
   \frac{d p}{d t}+\kappa \nabla\cdot \uu+\uu\cdot \nabla p=0.
   \eeq
   Since for most slightly compressible fluids in porous media the value of the constant $\kappa$ is large,
following engineering tradition we drop the last term in \eqref{eq7} and study the reduced equation,
\beq\label{lin-p} \frac{d p}{d t} +\kappa \nabla\cdot \uu=0.\eeq
By rescaling the time variable,  hereafter we assume that $\kappa=1$.  
    
Let  $\s=\nabla p$. Equations \eqref{lin-p} and \eqref{eq3} are equivalent to the system 
\beq\label{syseq}
\begin{cases}
     p_t +  \nabla \cdot \uu =0,\\
     \uu + K(|\s|)\s=0, \\
     \s- \nabla p = 0.
\end{cases}
\eeq 

The following properties of  function $K(\xi)$ are proved in Lemma III.5 and III.9 of \cite{ABHI1}, Lemma 2.1 and 5.2 of \cite{HI1} . 
\begin{lemma} We have for any $\xi\ge 0$ that 

(i) $K: [0,\infty)\to (0,a_0^{-1}]$ and it decreases in $\xi.$  

(ii) Type of degeneracy  
\beq\label{K1}  \frac{c_1}{(1+\xi)^a}\leq K(\xi)\leq \frac{c_2}{(1+\xi)^a}.  \eeq

(iii) For all $n\ge 1,$ 
\beq\label{K2} c_3(\xi^{n-a}-1)\leq K(\xi)\xi^n\leq c_2\xi^{n-a}.  \eeq

 (iv) Relation with its derivative 
           \beq\label{K3} -aK(\xi)\leq K'(\xi)\xi\leq 0. \eeq
  where $c_1, c_2, c_3$ are positive constants depending on $\Omega$ and $g$ and constant $$a=\frac{\alpha_N}{\alpha_N+1}=\frac{{\rm deg} (g)}{{\rm deg} (g)+1}\in(0,1).$$
\end{lemma}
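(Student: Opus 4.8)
The plan is to reduce everything to the substitution $\xi = s g(s)$ that defines $s = s(\xi)$ implicitly in \eqref{eq4}, and to exploit its most useful consequence, the identity $K(\xi)\,\xi = s(\xi)$. First I would verify that $h(s) := s g(s)$ is a smooth strictly increasing bijection of $[0,\infty)$ onto itself: since $g$ has nonnegative coefficients and $a_0>0$ we have $g(s)\ge a_0>0$ and $g'(s)\ge 0$, so $h'(s) = g(s)+s g'(s)>0$ for $s>0$ and $h(s)\to\infty$. Hence $s(\xi)=h^{-1}(\xi)$ is well defined, smooth and increasing, and $K(\xi)=1/g(s(\xi))$ is smooth. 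Part (i) is then immediate: $K>0$ always, $g(s(\xi))\ge g(0)=a_0$ gives $K\le a_0^{-1}$ with equality at $\xi=0$, and since both $s(\xi)$ and $g$ increase, $K$ decreases.

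For (ii) I would first record the elementary two-sided polynomial bound $c\,(1+s^{\alpha_N})\le g(s)\le C\,(1+s^{\alpha_N})$ for all $s\ge 0$, with $c=\min(a_0,a_N)$ and $C=\sum_i a_i$; the lower bound drops the intermediate nonnegative terms and the upper bound uses $s^{\alpha_i}\le 1+s^{\alpha_N}$. Combining this with $\xi=s g(s)$ yields $1+\xi\asymp 1+s^{\alpha_N+1}\asymp(1+s)^{\alpha_N+1}$, and raising to the power $a=\alpha_N/(\alpha_N+1)$ gives $(1+\xi)^a\asymp(1+s)^{\alpha_N}\asymp g(s)=K(\xi)^{-1}$. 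Inverting this comparison produces the claimed bounds with explicit $c_1,c_2$. The main bookkeeping obstacle is exactly this back-and-forth between the $\xi$ variable and the $s$ variable while keeping constants uniform both near $\xi=0$ and for large $\xi$; once $(1+\xi)^a\asymp g(s)$ is in hand everything is mechanical.

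Part (iii) then follows from (ii) by one-variable estimates: the upper bound is $K\xi^n\le c_2\xi^n(1+\xi)^{-a}\le c_2\xi^{n-a}$ using $(1+\xi)^{-a}\le\xi^{-a}$, and for the lower bound I would split at $\xi=1$, noting that for $\xi\le 1$ the right-hand side $c_3(\xi^{n-a}-1)$ is nonpositive while $K\xi^n\ge 0$, and for $\xi\ge 1$ using $1+\xi^a\le 2\xi^a$ to get $K\xi^n\ge c_1\xi^n(1+\xi)^{-a}\ge \tfrac{c_1}{2}(\xi^{n-a}-1)$.

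For (iv) I would differentiate $K=1/g(s)$ and use the inverse function theorem $s'(\xi)=1/h'(s)=1/(g(s)+s g'(s))$. After inserting $\xi=s g(s)$ this gives the clean quotient
\beqs
\frac{K'(\xi)\,\xi}{K(\xi)} = -\frac{s g'(s)}{g(s)+s g'(s)} = -\frac{t}{1+t}, \qquad t := \frac{s g'(s)}{g(s)}\ge 0,
\eeqs
which is automatically $\le 0$, yielding $K'(\xi)\xi\le 0$ (consistent with the monotonicity in (i)). The lower bound $K'(\xi)\xi\ge -aK(\xi)$ is equivalent to $t/(1+t)\le a$, i.e. $t\le\alpha_N$, and this is precisely the term-by-term inequality $s g'(s)=\sum_i a_i\alpha_i s^{\alpha_i}\le \alpha_N\sum_i a_i s^{\alpha_i}=\alpha_N g(s)$, valid since $\alpha_i\le\alpha_N$. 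This last algebraic identity is the heart of (iv) and is exactly what pins down the exponent $a=\alpha_N/(\alpha_N+1)$.
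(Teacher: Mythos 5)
Your proof is correct. The paper itself gives no argument for this lemma --- it simply cites Lemma III.5 and III.9 of \cite{ABHI1} and Lemmas 2.1 and 5.2 of \cite{HI1} --- and your reconstruction follows essentially the same standard route as those references: invert $\xi=sg(s)$, use the identity $K(\xi)\xi=s(\xi)$, compare $g(s)\asymp 1+s^{\alpha_N}$ to get $(1+\xi)^a\asymp g(s)$ for (ii)--(iii), and compute $K'(\xi)\xi/K(\xi)=-t/(1+t)$ with $t=sg'(s)/g(s)\le\alpha_N$ for (iv), which is exactly what forces $a=\alpha_N/(\alpha_N+1)$. Two cosmetic points: in (iii) the inequality you want for $\xi\ge 1$ is $(1+\xi)^a\le 2^a\xi^a\le 2\xi^a$, not ``$1+\xi^a\le 2\xi^a$'' as written; and when $\alpha_1<1$ the function $g'$ blows up at $s=0$, so $K$ is only $C^1$ on $(0,\infty)$ rather than smooth on the closed half-line --- but since $sg'(s)\to 0$ as $s\to 0^+$, $h'=g+sg'$ extends continuously with $h'(0)=a_0>0$, and your computation in (iv) is valid for all $\xi>0$, which is all the stated inequality requires.
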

We define 
\beq\label{Hdef}
H(\xi)=\int_0^{\xi^2} K(\sqrt{s}) dx, \text{~for~} \xi\geq 0. 
\eeq
The function $H(\xi)$ can compare with $\xi$ and $K(\xi)$ by
\beq
K(\xi)\xi^2 \leq H(\xi)\leq 2K(\xi)\xi^2, \label{H1}
\eeq
as a consequence of \eqref{K1}--\eqref{K2}
\beq
C(\xi^{2-a}-1) \leq H(\xi) \leq 2C\xi^{2-a}. \label{H2}
\eeq 
Next we recall important monotonicity properties    
\begin{lemma}[cf. \cite{HI1}, Lemma 5.2]  For all $y, y' \in \mathbb{R}^d$,  one has 
\beq\label{Mono}
(K(|y'|)y' -K(|y|)y )\cdot(y'-y)\geq (1-a)K( \max\{ |y|, |y'|\} )|y' -y|^2 .
\eeq      
\end{lemma}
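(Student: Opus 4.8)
The plan is to realize the left-hand side as an integral of a quadratic form along the segment joining $y$ and $y'$, and then to bound that quadratic form from below using the derivative estimate (iv). Set $F(y)=K(|y|)y$, so that the claim concerns $(F(y')-F(y))\cdot(y'-y)$. Since $h(s)=sg(s)$ has $h'(s)=g(s)+sg'(s)>0$, the inverse $s(\xi)$ and hence $K$ are smooth on $(0,\infty)$, and $F$ is Lipschitz near the origin; thus $F$ is absolutely continuous along the segment $\gamma(t)=y+t(y'-y)$, $t\in[0,1]$, and I may write
\[
(F(y')-F(y))\cdot(y'-y)=\int_0^1 (y'-y)^{\top} DF(\gamma(t))\,(y'-y)\,dt,
\]
where $DF$ denotes the Jacobian of $F$ (the single value of $t$, if any, for which $\gamma(t)=0$ contributes nothing to the integral).

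First I would compute the Jacobian. Writing $\xi=|y|$, differentiation of $K(|y|)y_i$ in $y_j$ gives
\[
DF(y)=K(\xi)\,I+\frac{K'(\xi)}{\xi}\,y\,y^{\top},
\]
so that for any vector $v$,
\[
v^{\top}DF(y)\,v=K(\xi)|v|^2+\frac{K'(\xi)}{\xi}(y\cdot v)^2.
\]
Next comes the key estimate. By (iv) we have $K'(\xi)\le 0$, so the rank-one term is nonpositive; using $(y\cdot v)^2\le \xi^2|v|^2$ together with the nonpositivity of $K'(\xi)/\xi$, and then the lower bound $K'(\xi)\xi\ge -aK(\xi)$ from (iv), I obtain
\[
\frac{K'(\xi)}{\xi}(y\cdot v)^2\ge K'(\xi)\xi\,|v|^2\ge -aK(\xi)|v|^2,
\]
and therefore $v^{\top}DF(y)\,v\ge(1-a)K(|y|)\,|v|^2$ for every $y$ and every $v$.

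Finally I would apply this pointwise with $v=y'-y$ along the segment and integrate:
\[
(F(y')-F(y))\cdot(y'-y)\ge(1-a)\,|y'-y|^2\int_0^1 K(|\gamma(t)|)\,dt.
\]
Because the Euclidean norm is convex, $|\gamma(t)|\le\max\{|y|,|y'|\}$ for all $t\in[0,1]$, and since $K$ is decreasing by (i), this gives $K(|\gamma(t)|)\ge K(\max\{|y|,|y'|\})$; integrating over $[0,1]$ yields the claimed bound. The only delicate points are the endpoint behaviour at $\xi=0$ (harmless, since the factor $K'(\xi)/\xi$ only ever appears multiplied by $(y\cdot v)^2$ and is controlled after invoking $K'(\xi)\xi\ge -aK(\xi)$) and keeping the inequalities pointing the right way when multiplying through by the nonpositive quantity $K'(\xi)/\xi$; these constitute the main, though modest, obstacle.
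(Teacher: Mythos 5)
Your proof is correct and follows essentially the same route as the source the paper defers to: the paper states this lemma without proof, citing \cite{HI1}, and the argument there likewise writes $(K(|y'|)y'-K(|y|)y)\cdot(y'-y)$ as the integral along the segment of the quadratic form of the Jacobian of $y\mapsto K(|y|)y$ and bounds that form below by $(1-a)K(|y|)|v|^2$ using the two-sided derivative estimate \eqref{K3} together with the monotonicity of $K$ and $|\gamma(t)|\le\max\{|y|,|y'|\}$. Your treatment of the delicate points---flipping the inequality $(y\cdot v)^2\le\xi^2|v|^2$ against the nonpositive factor $K'(\xi)/\xi$, and dismissing a possible passage of the segment through the origin---is sound, and indeed mirrors the segment/mean-value device the paper itself employs in the proof of Lemma~\ref{conts}.
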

\begin{lemma}[cf. \cite{ABHI1}, Lemma III.11]
For the vector functions $\s_1, \s_2$, we have 
\beq\label{Monoprop}
\int_\Omega (K(|\s_1|)\s_1-K(|\s_2|)\s_2)\cdot(\s_1 -\s_2) dx\geq C\omega\norm{\s_1-\s_2}_{L^{2-a}(\Omega)}^2,
\eeq
where 
\beq
\omega =\left(1+ \max\{\|\s_1\|_{L^{2-a}(\Omega)} ,  \|\s_2\|_{L^{2-a}(\Omega)} \}\right)^{-a}.
\eeq
\end{lemma}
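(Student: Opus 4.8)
The plan is to reduce the integral inequality to the pointwise monotonicity estimate \eqref{Mono} and then upgrade the resulting weighted $L^2$ quantity to the $L^{2-a}$ norm by a Hölder argument. First I would apply \eqref{Mono} pointwise with $y'=\s_1(x)$ and $y=\s_2(x)$, and combine it with the lower degeneracy bound $K(\xi)\ge c_1(1+\xi)^{-a}$ from \eqref{K1}, to obtain the pointwise estimate
\[
(K(|\s_1|)\s_1-K(|\s_2|)\s_2)\cdot(\s_1-\s_2)\ge (1-a)c_1\,\frac{|\s_1-\s_2|^2}{(1+M)^a},\qquad M:=\max\{|\s_1(x)|,|\s_2(x)|\}.
\]
Integrating over $\Omega$ then reduces the claim to showing that the weighted integral $I:=\int_\Omega |\s_1-\s_2|^2(1+M)^{-a}\,dx$ is bounded below by $C\omega\norm{\s_1-\s_2}_{L^{2-a}(\Omega)}^2$.

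The key step is a Hölder interpolation. I would factor the integrand of $\norm{\s_1-\s_2}_{L^{2-a}(\Omega)}^{2-a}=\int_\Omega|\s_1-\s_2|^{2-a}\,dx$ as
\[
|\s_1-\s_2|^{2-a}=\Big(\frac{|\s_1-\s_2|^2}{(1+M)^a}\Big)^{\frac{2-a}{2}}\,(1+M)^{\frac{a(2-a)}{2}},
\]
and apply Hölder's inequality with the conjugate exponents $p=\frac{2}{2-a}$ and $q=\frac{2}{a}$. Raising the first factor to the power $p$ returns precisely the weighted integrand of $I$, while raising the second factor to the power $q$ gives $(1+M)^{2-a}$. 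This yields
\[
\norm{\s_1-\s_2}_{L^{2-a}(\Omega)}^{2-a}\le I^{\frac{2-a}{2}}\Big(\int_\Omega (1+M)^{2-a}\,dx\Big)^{\frac a2}.
\]
Solving for $I$, i.e. raising to the power $\frac{2}{2-a}$, isolates the lower bound $I\ge \norm{\s_1-\s_2}_{L^{2-a}(\Omega)}^2\, J^{-a/(2-a)}$, where $J:=\int_\Omega (1+M)^{2-a}\,dx$.

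It then remains to control $J$. Since $2-a\in(1,2)$ I would use $M\le|\s_1|+|\s_2|$ together with $(1+M)^{2-a}\le 2^{1-a}(1+M^{2-a})$ and $M^{2-a}\le|\s_1|^{2-a}+|\s_2|^{2-a}$ to get $J\le C(|\Omega|+\norm{\s_1}_{L^{2-a}(\Omega)}^{2-a}+\norm{\s_2}_{L^{2-a}(\Omega)}^{2-a})$. Writing $R:=\max\{\norm{\s_1}_{L^{2-a}(\Omega)},\norm{\s_2}_{L^{2-a}(\Omega)}\}$ and invoking the superadditivity of $t\mapsto t^{2-a}$ (so that $1+R^{2-a}\le(1+R)^{2-a}$) bounds this by $C(1+R)^{2-a}$. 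Hence $J^{a/(2-a)}\le C(1+R)^a=C\omega^{-1}$, and substituting back gives $I\ge C\omega\norm{\s_1-\s_2}_{L^{2-a}(\Omega)}^2$, which combined with the pointwise estimate proves the lemma.

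I expect the main obstacle to be the Hölder step: the nonstandard exponents $p=\frac{2}{2-a}$, $q=\frac{2}{a}$ and the precise algebraic splitting of $|\s_1-\s_2|^{2-a}$ must be arranged so that the weighted $L^2$ integrand reappears exactly upon taking the $p$-th power. Once this balance is struck, the pointwise estimate \eqref{Mono} and the elementary bound on $J$ are routine, and the degeneracy exponent $a$ propagates cleanly into the factor $\omega=(1+R)^{-a}$.
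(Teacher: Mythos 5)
Your proof is correct: the pointwise application of \eqref{Mono} combined with the lower degeneracy bound in \eqref{K1}, the H\"older step with conjugate exponents $\frac{2}{2-a}$ and $\frac{2}{a}$ (which does return the weighted $L^2$ integrand exactly, as you verified), and the elementary bound $\int_\Omega (1+M)^{2-a}\,dx \le C(1+R)^{2-a}$ all check out, with the $\Omega$-dependence safely absorbed into $C$. The paper does not reprove this lemma but cites Lemma III.11 of \cite{ABHI1}, where the weighted monotonicity is established by essentially this same argument, so your route matches the source.
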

For the continuity of $K(\xi, \vec a)$ we have the following fact 
\begin{lemma}\label{conts} For all $y, y' \in \mathbb{R}^d$. There is a positive constant $C$ such that  
\beq\label{Lips}
   |K(|y'|)y' -K(|y|)y| \leq C|y' -y|.
\eeq  
\end{lemma}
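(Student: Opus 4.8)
The plan is to show that the vector field $F(y)=K(|y|)y$ is globally Lipschitz on $\mathbb{R}^d$, with \eqref{Lips} following from a uniform bound on its Jacobian. First I would work in the smooth region $y\neq 0$, where $\xi\mapsto K(\xi)$ is differentiable and one computes componentwise $\partial_j F_i = K(|y|)\delta_{ij}+K'(|y|)\,y_i y_j/|y|$, giving the symmetric matrix
\beq
DF(y)=K(|y|)I+\frac{K'(|y|)}{|y|}\,y\otimes y.
\eeq

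Since $DF(y)$ is symmetric, I would read off its spectrum geometrically: every vector orthogonal to $y$ is an eigenvector with eigenvalue $K(|y|)$, while $y$ itself is an eigenvector with eigenvalue $K(|y|)+K'(|y|)|y|$. Invoking the derivative relation \eqref{K3}, namely $-aK(|y|)\le K'(|y|)|y|\le 0$, the radial eigenvalue lies in $[(1-a)K(|y|),\,K(|y|)]$, so all eigenvalues are nonnegative and bounded above by $K(|y|)$. Hence the operator norm satisfies $\|DF(y)\|=K(|y|)\le a_0^{-1}$, the last inequality being the uniform bound $K\le a_0^{-1}$ from part (i) of the first lemma. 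This is the heart of the estimate; the remaining work is passing from this infinitesimal bound to the global statement.

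Along the segment $z(t)=y+t(y'-y)$, $t\in[0,1]$, whenever it avoids the origin I would write $F(y')-F(y)=\int_0^1 DF(z(t))(y'-y)\,dt$ and bound the integrand by $a_0^{-1}|y'-y|$. The one delicate point — where I expect to spend the most care — is the origin, at which $F$ is not differentiable, so I would dispose of it by the triangle inequality rather than by the chain rule. If $y=0$ (or $y'=0$) the claim reduces to $K(|y'|)|y'|\le a_0^{-1}|y'|$, immediate from $K\le a_0^{-1}$; and if the open segment passes through $0$, i.e.\ $y'=\lambda y$ with $\lambda<0$, I would split at the origin, obtaining $|F(y')-F(y)|\le |F(y')-F(0)|+|F(0)-F(y)|\le a_0^{-1}(|y'|+|y|)=a_0^{-1}|y'-y|$, the final equality holding because $y$ and $y'$ point in opposite directions. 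Combining the three cases yields \eqref{Lips} with the explicit constant $C=a_0^{-1}$.
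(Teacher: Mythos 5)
Your proof is correct, and it takes a genuinely different route from the paper. The paper parametrizes the segment $\ell(t)=ty'+(1-t)y$, applies the mean value theorem to $h(t)=K(|\ell(t)|)\ell(t)$, and bounds $|h'(t_0)|$ using \eqref{K3}, Minkowski's inequality and $K\le a_0^{-1}$; the degenerate case where the segment crosses the origin is handled by perturbing $y'$ to some $y_\varep$ off the segment and passing to the limit. You instead diagonalize the symmetric Jacobian $DF(y)=K(|y|)I+\frac{K'(|y|)}{|y|}\,y\otimes y$, observing that its spectrum is $\{K(|y|)\}$ on $y^\perp$ and $K(|y|)+K'(|y|)|y|\in[(1-a)K(|y|),K(|y|)]$ in the radial direction, so that $\|DF(y)\|=K(|y|)\le a_0^{-1}$, and then integrate along the segment; the origin is disposed of by the triangle inequality together with the identity $|y'|+|y|=|y'-y|$ for antiparallel vectors. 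Your spectral argument correctly exploits that \eqref{K3} controls the product $K'(\xi)\xi$ rather than $K'$ alone (which may blow up as $\xi\to0$ when $\alpha_1<1$), so the operator-norm bound survives near the origin. What your approach buys: the explicit sharp constant $C=a_0^{-1}$, a cleaner degenerate case with no limiting argument, and as a free byproduct the lower eigenvalue bound $(1-a)K(|y|)$, which is precisely the infinitesimal form of the monotonicity estimate \eqref{Mono}. It is also technically more robust: the paper writes $|h(1)-h(0)|=|h'(t_0)|$ for the vector-valued $h$, but the mean value theorem in equality form fails for vector-valued maps and should be replaced by a supremum or integral bound --- exactly the integral form $F(y')-F(y)=\int_0^1 DF(z(t))(y'-y)\,dt$ that you use, which quietly repairs that step.
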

\begin{proof}
{\it Case 1}: The origin does not belong to the segment connect  $y'$ and  $y$.  Let $\ell(t)=ty'+(1-t)y, t\in[0,1]$. Define $ h(t)=K(|\ell(t)|)\ell(t)$  for $t\in[0,1]$.
By the mean value theorem, there is $t_0\in[0,1]$ with $\ell(t_0)\neq 0$, such that
\begin{align*}
|K(|y'|)y' -K(|y|)y|^2 &= | h(1) - h(0)|^2  =|h'(t_0)|^2 \\
   &= \left| K'(|\ell(t_0)|)\frac{\ell(t_0)\cdot \ell'(t_0) }{|\ell(t_0)|}\ell(t_0)   +K(|\ell(t_0)|)\ell'(t_0) ) \right|^2.
\end{align*} 
Using \eqref{K3} and Minkowski's inequality we obtain    
\beqs   
|K(|y'|)y' -K(|y|)y|^2 \le 2 | K(|\ell(t_0)|)|^2\left\{a^2 \left|\frac{\ell(t_0)\cdot \ell'(t_0) }{|\ell(t_0)|^2}\ell(t_0)  \right|^2   + |\ell'(t_0) )|^2\right\}.  
\eeqs
The \eqref{Lips} follows by the boundedness of $K(\cdot)\le a_0^{-1}$.  

{\it Case 2}: The origin belongs to the segment connect $y', y$. We replace $y'$ by some $y_\varep\neq 0$ so that $0\notin [y_\varep, y]$ and $y_\varep \to 0$ as $\varep \to 0$. Apply the above inequality for $y$ and $y_\varep$, then let $\varep\to 0$.
\end{proof} 

{\bf Notations.} Let $L^2(\Omega)$ be the set of square integrable function on $\Omega$ and $( L^2(\Omega))^d$ the space of $d$-dimensional vectors which have all components in $L^2(\Omega)$. 

We denote by $(\cdot, \cdot)$ the inner product in either $L^2(\Omega)$ or $(L^2(\Omega))^d$ that is
\beqs
( \xi,\eta )=\int_\Omega \xi\eta dx \quad \text{ or } (\boldsymbol{\xi},\boldsymbol \eta )=\int_\Omega \boldsymbol{\xi}\cdot \boldsymbol{\eta} dx. 
\eeqs
and  $\langle \cdot, \cdot \rangle $ defined by    
\beqs
\langle u,v \rangle =\int_\Gamma uvd\sigma.  
\eeqs  
 The notation $\norm {\cdot}$ will means scalar norm $\norm{\cdot}_{L^2(\Omega)}$ or vector norm $\norm{\cdot}_{(L^2(\Omega))^d}$. 

For $1\le q\le +\infty$ and $m$ any nonnegative integer, let
\beqs
W^{m,q}(\Omega) = \{f\in L^q(\Omega), D^\alpha f\in L^q(\Omega), |\alpha|\le m \}
\eeqs 
denote a Sobolev space endowed with the norm 
\beqs
\norm{f}_{m,q} =
\left( \sum_{|\alpha|\le m} \norm{D^\alpha f}^q_{L^q(\Omega)} \right)^{\frac 1q}.
\eeqs    
Define $H^m(\Omega)= W^{m,2}(\Omega)$ with the norm $\norm{\cdot}_m =\norm{\cdot }_{m,2}$. 

For functions $p,u$ and vector functions $\uu,\s, \mathbf{v}$ we use short hand notations 
\beqs
\norm{p(t)} = \norm{ p(\cdot, t)}_{L^2(\Omega)},\quad \norm{\uu(t)} = \norm{ \uu(\cdot, t)}_{L^2(\Omega)}, \quad \norm{\s(t)}_{L^{2-a}} = \norm{ \s(\cdot, t)}_{L^{2-a}(\Omega)}  
\eeqs  
and 
\beqs
 u^0 =  u(\cdot,0), \quad  {\mathbf v}^0 =  {\mathbf v} (\cdot,0).  
\eeqs
for all functions $u$ and vector functions ${\mathbf v}$.   

Throughout this paper the constants 
$$\beta =2-a,\quad \lambda = \frac {2-\beta}\beta,\quad \delta= \frac{\beta}{\beta-1}.$$ 
The arguments $C, C_1$ will represent for positive generic constants and their values  depend on exponents, coefficients of polynomial  $g$,  the spatial dimension $d$ and domain $\Omega$, independent of the initial and boundary data, size of mesh and time step. These constants may be different place by place. 
\myclearpage
\section{Expanded mixed finite element methods}

In this section, we develop the semidiscrete expanded mixed finite element method for the problem \eqref{lin-p} and a fully discrete version.  

Consider the initial value boundary problem (IVBP):  
\beq\label{eqorig}
\begin{cases}
     p_t +  \nabla \cdot \uu =f, \\
     \uu + K(|\s|)\s=0, \\
     \s- \nabla p = 0,
\end{cases}
\eeq 
for all $x\in  \Omega, t\in(0,T)$,  where $f: \Omega\times (0,T)\to \mathbb R$ is given and $f\in C^1([0,T];L^\infty(\Omega)).$ We assume the flux condition on the boundary:
$\uu\cdot \nu =0, x\in \Gamma,\ t\in[0,T],$
where $\nu$ is the outward normal vector on $\Gamma$.
The initial data: $p(x,0) = p_0(x)$ is given.

Let $W=L^2(\Omega)$, $\tilde W= (L^2(\Omega))^d,$ and the Hilbert space    
$$V= H_0({\rm div}, \Omega)= \left\{ \vv \in  (L^2(\Omega))^d, \nabla \cdot \vv \in L^2(\Omega), \vv\cdot  \nu =0 \text { on } \Gamma \right\}$$
with the norm defined by 
$
\norm{ \vv}_V^2 = \norm{\vv}^2 + \norm{ \nabla \cdot \vv }^2.
$

The variational formulation is defined as the following:  Find $(p,\s, \uu):[0,T] \rightarrow W\times \tilde W\times  V $ such that 
\begin{subequations}\label{weakform}
\begin{align}
&\label{W1}    ( p_t, w) +  \left(\nabla \cdot \uu, w\right) =\left(f,w\right),   && \forall w\in W,    \\
&\label{W2}     (\uu, \zz)  + ( K(|\s| )\s ,\zz )=0, && \forall \zz\in \tilde W,\\
 &\label{W3}     (\s, \vv)  + ( p , \nabla \cdot \vv )=  0 &&\forall  \vv\in V,
\end{align}
\end{subequations} 
with $p(x,0)=p_0(x),$ $x\in \Omega$ and  $ \uu\cdot \nu =0$, $ x\in \Gamma,\ t\in[0,T].$

Let $\{\mathcal T_h\}_h$ be a family of quasi-uniform triangulations of $\Omega$ with $h$ being the maximum diameter of the element.
Let $V_h$ be the Raviart-Thomas-N\'ed\'elec spaces ~\cite{Ned80,RavTho77a} of order $r\ge 0$ or Brezzi-Douglas-Marini spaces \cite{BDM85} of index $r$     over each triangulation $\mathcal T_h$, $W_h$ the space of discontinuous piecewise polynomials of degree $r$ over $\mathcal T_h$, $\tilde W_h$ the n-dimensional vector space of discontinuous piecewise polynomials of degree $r$ over $\mathcal T_h$.  Let $W_h \times  \tilde W_h\times V_h $ be the mixed element spaces approximating to $W\times\tilde W\times V $.     
The semidiscrete expanded mixed formulation of~\eqref{weakform} can read as following: Find $(p_h,\s_h,\uu_h):[0,T] \rightarrow W_h\times \tilde W_h\times V_h$ such that
\begin{subequations}\label{semidiscreteform}
\begin{align}
  &\label{Semi1}  (p_{h,t}, w_h) + \left(\nabla\cdot \uu_h,  w_h\right) =(f,w_h),   &&\forall w_h\in W_h,    \\
  &\label{Semi2}   (\uu_h,\zz_h)  + ( K(|\s_h| )\s_h   , \zz_h )=0,  &&\forall  \zz_h\in \tilde W_h,\\
  &\label{Semi3}    (\s_h,\vv_h)  + (  p_h , \nabla\cdot \vv_h )= 0,  &&\forall  \vv_h\in V_h,
\end{align}
\end{subequations} 
where $p_h(x,0)=\pi p_0(x).$    $ \uu_h\cdot \nu =0 $, $ x\in \Gamma,\ t\in[0,T].$
  
We use the standard $L^2$-projection operator 
$\pi: W \rightarrow W_h$,   $\pi: \tilde W \rightarrow \tilde W_h$ satisfying
\beq
( \pi w , \nabla \cdot \vv_h ) = ( w , \nabla \cdot \vv_h )  
\eeq
 for all  $w\in W, \vv_h \in V_h,$ and
\beq
   ( \pi \zz , \zz_h ) = ( \zz , \zz_h )
\eeq
 for all  $\zz\in \tilde W, \zz_h\in \tilde W_h.$ 
 
Also we use  $H$-div projection $\Pi : V
\rightarrow V_h$ defined by
\begin{equation}
( \nabla \cdot \Pi \vv , w_h )
= ( \nabla \cdot \vv , w_h )
\end{equation}
for all $w_h \in W_h$.

These projections have well-known approximation properties as in~\cite{BF91, JT81}. Below are the standard approximation properties for these projections  
 
(i) There exist positive constant $C_1, C_2$ such that
\begin{equation}
\label{prjpi}
\begin{split}
\norm{\pi w - w }_{0,\alpha} \leq C_1 h^m \norm{w}_{m,\alpha} \text{ and } 
\norm{ \pi \zz - \zz }_{0,\alpha} \leq C_2 h^m \norm{\zz}_{m,\alpha}, 
\end{split}
\end{equation}
for all $w \in W^{m,\alpha}(\Omega)$, $\zz\in (W^{m,\alpha}(\Omega))^d$,  $0\le m \le r+1, 1\le\alpha\le \infty$. Here $\norm {\cdot}_{m,\alpha}$ denotes a standard norm in Sobolev space $W^{m,\alpha}$. In short hand, when $\alpha=2$ we write \eqref{prjpi} as   
\begin{equation}
\begin{split}
\norm{\pi w - w } \leq C_1 h^m \norm{w}_{m}, \quad  \text { and }\quad \norm{ \pi \zz - \zz } \leq C_2 h^m \norm{\zz}_m. 
\end{split}
\end{equation}

(ii) There exists a positive $C_3$ such that
\begin{equation}
\label{prjPi}
\norm{\Pi \vv - \vv}_{0,\alpha}  \leq C_3 h^m \norm{ \vv }_{m,\alpha}
\end{equation}
for any $\vv \in \left( W^{m,\alpha} ( \Omega ) \right)^d,$  $1/\alpha \leq m \leq r+1$, $ 1\le \alpha \le \infty $.

Because of the commuting relation between $\pi, \Pi$ and the
divergence (i.e., that $\nabla \cdot \Pi \uu = \pi ( \nabla \cdot \uu
)$, we also have the bound
\begin{equation}
\label{prjdiv}
\| \nabla \cdot ( \Pi \vv - \vv ) \|_{0,\alpha} \leq C_1 h^m \norm{\nabla \cdot \vv }_{m,\alpha},
\end{equation}
provided $\nabla \cdot \vv \in W^{m,\alpha}(\Omega)$ for $1 \leq m \leq r+1$.

Let $N$ be the positive integer, $t_0=0 < t_1 <\ldots < t_n= T$ be partition interval $[0,T]$ of $N$ sub-intervals, and let $\Delta t = t_{n} - t_{n-1}=T/N$  be the $n$-th time step size, $t_n=n\Delta t$ and $\varphi^n = \varphi(\cdot, t_n)$. 

The discrete time expanded mixed finite element approximation to \eqref{weakform} is defined as follows:  Find $ (p_h^n,\s_h^n, \uu_h^n)\in W_h\times \tilde W_h\times V_h$, $n=1,2,\dots, N$, such that 
\begin{subequations}\label{fullydiscreteform}
\begin{align}
&\label{fully1}    \left( \frac{ p_h^n -  p_h^{n-1}}{\Delta t }, w_h\right) + \left(\nabla \cdot \uu_h^n, w_h\right) =(f^n , w_h ),   &&\forall w_h\in W_h, \\
&\label{fully2}     (\uu_h^n, \zz_h)  + ( K(|\s_h^n| )\s_h^n ,\zz_h )=0, &&\forall \zz_h\in {\tilde W}_h,\\
 &\label{fully3}     (\s_h^n,{\bf v_h})  + ( p_h^n , \nabla\cdot \vv_h )=0, &&\forall \vv_h\in V_h.
\end{align}
\end{subequations} 
The initial approximations are chosen by
$$
p_h^0(x)=\pi p_0(x) ,  \quad \s_h^0(x)=\nabla p_h^0(x) ,\quad  \uu_h^0(x) =K(|\s_h^0(x)|)\s_h^0(x) 
$$
for all $x\in \Omega.$
%
\section{Estimates  of solutions }
 Using the theory of monotone operators \cite{MR0259693,s97,z90}, the authors in \cite {HIKS1} proved the global existence of weak solution of equation \eqref{eqorig}. Moreover $p\in C([0,T),L^\alpha(\Omega))$, $\alpha\ge 1$ and $L_{loc}^{\beta}([0,T),W^{1,\beta}(\Omega))$  and  $p_t \in L^{\beta'}_{loc}\bigl([0,T), (W^{1,\beta}(\Omega))'\bigr)\cap  L^2_{loc}\bigl([0,T), L^2(\Omega)\bigr)$
  provided the initial, boundary data and $f$  sufficiently smooth. For  a { \it priori } estimate, we assume that the weak solution is sufficiently regularities both in $x$ and $t$ variables. 

\begin{theorem}\label{ph} Let $(p,\s,\uu)$ be the solution to the problem \eqref{weakform}.  We have 

{\rm (i)}  
\beq\label{B4p}
\sup_{t\in[0,T]}\norm{p(t)}  \le  \norm {p^0} +\int_0^T \norm {f(t)} dt. 
\eeq 

{\rm (ii)} For any $t\in (0,T)$,
\beq\label{Bpt}
\int_0^t \norm{p_t(t) }^2 dt +  \int_\Omega H(x,t)dx + \norm{p(t)}^2 \le C\mathcal M(t), 
\eeq
where
\beq\label{BM}
\mathcal M(t) = \norm{\s^0}^{\beta}_{L^{\beta}(\Omega)} + \norm{p^0}^2+ 2 \int_0^t\norm{f(t)}^2+ t\left(\norm {p^0} +\int_0^T \norm {f(t)} dt\right).
\eeq
{\rm (iii)} For any $t\in (0,T)$, 
\beq\label{Bs}
 \begin{aligned}
 \norm{\s(t)}^{\beta}_{L^{\beta}(\Omega)} +\norm{\uu(t)} \le C \Big\{\norm{p^0}^2 + \Big(\int_0^T \norm{f(t)} dt\Big)^2 &\\
   + \int_0^t e^{-(t-\tau)}\norm{f(\tau)}^2 d\tau +1\Big\}&.
\end{aligned}
 \eeq
\end{theorem}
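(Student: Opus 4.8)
The three estimates are \emph{a priori} energy bounds, so the guiding principle throughout is to choose the test functions in \eqref{weakform} cleverly and then apply Gr\"onwall-type arguments. For part (i), the plan is to differentiate the $L^2$-norm of $p$ by testing \eqref{W1} with $w=p$, so that $\frac12\ddt\norm{p}^2 = (f,p)-(\nabla\cdot\uu, p)$. The divergence term is the obstacle: I would eliminate it by relating it back to $\s$ and $\uu$. Differentiating \eqref{W3} in $t$ and testing with $\vv=\uu$, combined with testing \eqref{W2} with $\zz=\s$, should let the nonlinear product $(K(|\s|)\s,\s)\ge 0$ be discarded by its sign. A cleaner route is to test \eqref{W1} with $w=p$ directly and estimate $(\nabla\cdot\uu,p)=-(\uu,\s)$ via \eqref{W3}; then Cauchy--Schwarz on $(f,p)$ yields $\ddt\norm{p}\le\norm f$, and integrating in time gives \eqref{B4p} after taking the supremum.

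For part (ii), the main idea is a stronger energy identity. I would test \eqref{W2} with $\zz=\uu$ to get $\norm{\uu}^2=-(K(|\s|)\s,\uu)$, differentiate \eqref{W3} in time and test with $\vv=\uu$, and test \eqref{W1} with $w=p_t$. The term $(K(|\s|)\s,\s_t)$ is exactly $\frac12\ddt\int_\Omega H(|\s|)\,dx$ by the definition \eqref{Hdef} of $H$ and the chain rule, so the entropy functional $\int_\Omega H\,dx$ appears naturally. Collecting terms should produce $\norm{p_t}^2+\ddt\int_\Omega H\,dx \le C(f,p_t)$ or similar, and absorbing $(f,p_t)$ by Young's inequality leaves $\tfrac12\norm{p_t}^2$ on the left. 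Integrating in time, bounding $\int_\Omega H(x,0)\,dx\le 2C\norm{\s^0}^{\beta}_{L^\beta}$ through \eqref{H2}, and adding in $\norm{p}^2$ from part (i) assembles $\mathcal M(t)$. The delicate point is handling the time derivative of the nonlinear flux and ensuring every boundary term from $\uu\cdot\nu=0$ vanishes.

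For part (iii), the strategy is to convert the entropy bound into pointwise-in-time control of $\norm{\s}^\beta_{L^\beta}$ and $\norm{\uu}$. The lower bound in \eqref{H2}, $C(\xi^{2-a}-1)\le H(\xi)$, gives $\norm{\s(t)}^\beta_{L^\beta}\le C(\int_\Omega H\,dx+|\Omega|)$, and \eqref{H1} together with \eqref{W2} relates $\norm{\uu}$ to $\int_\Omega K(|\s|)^2|\s|^2\,dx\le\int_\Omega H\,dx$ up to constants. To obtain the sharp exponential-in-time factor $\int_0^t e^{-(t-\tau)}\norm{f(\tau)}^2\,d\tau$ rather than the cruder bound from (ii), I expect one needs a differential inequality of the form $\ddt\mathcal E+\mathcal E\le C\norm f^2+C$ for the energy $\mathcal E=\int_\Omega H\,dx$, which then integrates via the integrating factor $e^{t}$. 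The term furnishing the $+\mathcal E$ on the left should come from the coercivity $(K(|\s|)\s,\s)\gtrsim\norm\s^\beta_{L^\beta}-C$ in \eqref{K2}, so that dissipation controls the energy itself.

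\textbf{Main obstacle.} I expect the hardest step to be part (iii): producing the genuine differential inequality with the $+\mathcal E$ term, since this requires pairing the dissipation estimate with the entropy identity so that the coercive lower bound from \eqref{K2} closes the loop and yields the decaying kernel $e^{-(t-\tau)}$.
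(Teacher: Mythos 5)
Your proposal follows essentially the same route as the paper's proof: part (i) comes from testing with $(p,\s,\uu)$ and discarding the nonnegative term $(K(|\s|)\s,\s)$; part (ii) from testing with $w=p_t$, $\zz=\s_t$ and the time-differentiated \eqref{W3} with $\vv=\uu$, yielding the entropy identity $\norm{p_t}^2+\tfrac12\ddt\int_\Omega H\,dx=(f,p_t)$; and part (iii) from adding the two identities to get exactly the differential inequality you predict, $\ddt\int_\Omega H\,dx\le-\int_\Omega H\,dx+\norm{f}^2+\norm{p}^2$, closed by Gr\"onwall with the kernel $e^{-(t-\tau)}$ and the comparison bounds \eqref{H1}--\eqref{H2}. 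The one slip is that in (ii) the useful test function in \eqref{W2} is $\zz=\s_t$ rather than $\zz=\uu$ (the latter gives $\norm{\uu}^2=-(K(|\s|)\s,\uu)$, which is not what is needed), but since you correctly identify $(K(|\s|)\s,\s_t)=\tfrac12\ddt\int_\Omega H\,dx$ as the key term, this is cosmetic.
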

\begin{proof}
(i) In \eqref{semidiscreteform}, picking up $w= p$, $\zz=\s$ and $\vv=\uu$ we have 
\begin{subequations}
\begin{align}
  \label{eq4p}  &( p_t,  p) + \left( \nabla\cdot \uu, p\right) =(f, p), \\
  \label{eq4u}&(\uu,\s)  + ( K(|\s| )\s   , \s )=0,  \\
  \label{eq4s}&(\s,\uu)  + (  p ,\nabla\cdot \uu )= 0.
\end{align}
\end{subequations}
We add three above equations to obtain
\beq\label{maineq}
\frac 12 \frac{d}{dt} \norm{p}^2 + \norm{ K^{\frac 12}(|\s| )\s}^2 = (f, p).
\eeq
For each $t\in [0,T)$, integrating the previous estimate on $(0,t)$ and taking the supremum in $t$ yield      
 \beq\label{dervp}
 \begin{aligned}
 \sup_{t\in[0,T]}\norm{p(t)}^2  + 2\int_0^T \norm{ K^{\frac 12}(|\s| )\s}^2 &\le \norm {p(0)}^2 +\int_0^T (f,p) dt\\
 & \le \norm {p(0)}^2 + \sup_{t\in[0,T]}\norm {p(t)}\int_0^T \norm {f} dt. 
\end{aligned}
\eeq  
Dropping the nonnegative term of the left-hand side of \eqref{dervp}, we have the bound 
 \beqs
\sup_{t\in[0,T]}\norm{p(t)}^2 \le \norm {p(0)}^2 + \sup_{t\in[0,T]}\norm {p(t)}\int_0^T \norm {f} dt. 
\eeqs   
This have the form $x^2 \le \delta^2 +\eta x$ where 
\begin{align*}
x&= \sup_{t\in[0,T]}\norm{p(t)}^2,\\
\eta &=\int_0^T \norm {f} dt \ge 0, \\
\delta &= \norm {p(0)}\ge 0. 
\end{align*} 
The element quadratic inequality shows that  $x\le \delta+\eta $. Hence it proves \eqref{B4p}.  

(ii) Selecting $w=p_t$, $\zz=\s_t$ in \eqref{Semi1}, \eqref{Semi2},   differentiating \eqref{Semi3} in time and then choosing $\vv_h=\uu$, we obtain    
\begin{align*}
  &( p_t,  p_t) + \left(\nabla\cdot \uu,  p_t\right) =(f, p_t), \\
  &(\uu,\s_t)  + ( K(|\s| )\s   , \s_t )=0, \\
  &(\s_t,\uu)  + (p_t ,  \nabla \cdot \uu )= 0.
\end{align*}
Summing up three equations gives  
\beq\label{ptEq}
\norm{p_t }^2 + ( K(|\s| )\s   , \s_t ) = (f, p_t).
\eeq 
Note that the function $H(\cdot)$ in \eqref{Hdef} gives      
$$ K(|\s| )\s  \cdot \s_t =\frac12\frac d{dt} H(\s).$$
We rewite \eqref{ptEq} as  
\beq\label{ptest}
\norm{p_t }^2 +  \frac 12 \frac d {dt}\int_\Omega H(x,t)dx = (f, p_t),  
\eeq
where $H(x,t)=H(\s(x,t)).$  

Now adding \eqref{maineq} and \eqref{ptest} we obtain  
\beq \label{neweq}
\norm{p_t }^2 +\frac 12 \frac d {dt}\left(\int_\Omega H(x,t)dx + \norm{p}^2\right)+ \norm{ K^{\frac 12}(|\s| )\s}^2  = (f,p)+(f,p_t). 
\eeq
Using Cauchy's inequality and the fact that 
\beqs
\norm{ K^{\frac 12}(|\s| )\s}^2 =\int_\Omega K(|\s| )\s^2 dx  \ge \frac 12\int_\Omega H(\s(x,t)) dx. 
\eeqs
It follows  that     
\beq\label{estpt}
\norm{p_t }^2 +  \frac d {dt}\left(\int_\Omega H(x,t)dx + \norm{p}^2\right)\le - \int_{\Omega}H(x,t)dx+  2\norm{f}^2+\norm {p}^2. 
\eeq
Integrating above inequality in $t$, using \eqref{B4p}, we find that   
\begin{multline}\label{estp-t}
\int_0^t \norm{p_t(\tau) }^2 d\tau +  \int_\Omega H(x,t)dx + \norm{p}^2\le -\int_0^t\int_{\Omega}H(x,t)dx\\
+ \int_\Omega H(x,0)dx + \norm{p(0)}^2+ 2 \int_0^t\norm{f}^2+ t\left(\norm {p(0)} +\int_0^T \norm {f} dt\right) . 
\end{multline}
Dropping the negative term on the right hand side of \eqref{estpt} and using the fact that $H(x,0)\le C|\s(x,0)|^{\beta}$ we obtain \eqref{Bpt}. 

(iii) We rewrite equation \eqref{neweq} as form
\beqs 
\begin{split}
\norm{p_t }^2 +\frac 12 \frac d {dt}\int_\Omega H(x,t)dx &=- \norm{ K^{\frac 12}(|\s| )\s}^2 + (f,p+p_t) -(p,p_t)\\
&\le -\frac 12 \int_\Omega H(x,t) + \frac 12 \left(\norm{f}^2 +\norm{p}^2 +\norm{p_t}^2 \right).
\end{split} 
\eeqs
This implies    
\beqs
\frac d {dt}\int_\Omega H(x,t)dx \le -\int_\Omega H(x,t) + \norm{f}^2 +\norm{p}^2. 
\eeqs
Applying Gronwall's inequality, we obtain 
\beqs
\int_\Omega H(x,t)dx \le  -e^{-t}\int_\Omega H(x,0)dx+C\int_0^t e^{-(t-\tau)}( \norm{f}^2+\norm{p}^2) d\tau .
\eeqs
Dropping the first term of the right hand side, using \eqref{B4p}, we  obtain 
\beq\label{res1}
\begin{split}
\int_\Omega H(x,t)dx &\le  C\int_0^t e^{-(t-\tau)}\left\{ \norm{f}^2+ \norm{p(0)}^2 + \left(\int_0^T \norm{f(s)} ds\right)^2 \right\} d\tau\\
&\le C \left\{\norm{p(0)}^2 + \left(\int_0^T \norm{f(t)} dt\right)^2  + \int_0^t e^{-(t-\tau)}\norm{f(\tau)}^2 d\tau \right\}.
\end{split}
\eeq
Note that 
\beq\label{B4s}
\int_\Omega H(x,t)dx \ge C\int_\Omega (|\s|^{\beta} -1) dx= C(\norm{\s}^{\beta}_{L^{\beta}(\Omega)}-1).
\eeq
In addition equation \eqref{eq4u} leads to  
\beq\label{B4u}
\norm {\uu} \le \norm {K^{\frac 12}(|\s|)\s}\le C \norm{\s}^{\beta}_{L^{\beta}(\Omega)}.
\eeq
Therefore, \eqref{Bs} follows from \eqref{res1}, \eqref{B4s} and \eqref{B4u}. The proof is complete. 
\end{proof}

Although solution is considered continuous at $t=0$ in appropriate Lebesgue or Sobolev space. Its time derivative is not. In the following       
we prove the time derivative solution is bounded. 
\begin{theorem}\label{phderv} Let $0<t_0<T$. For each $t\in [t_0, T],$ we have 
\beq\label{Bp-t}
\norm{  p_t(t)}^2\le  C t_0^{-1}\mathcal M(t_0)+ C \left(\mathcal M(t)+\int_0^t  (\norm{f_t(\tau)}^2 d\tau\right),
\eeq
where $\mathcal M(\cdot)$ is defined as in \eqref{BM}. 
\end{theorem}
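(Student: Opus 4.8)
The plan is to differentiate the three equations of \eqref{weakform} in time, test them against the time derivatives of the unknowns, and reduce everything to a single differential inequality for $\norm{p_t}^2$; the pointwise bound at positive time $t_0$ will then come from a time-averaging argument that converts the $L^2$-in-time control of $p_t$ furnished by Theorem \ref{ph}(ii) into a pointwise-in-time estimate.

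First I would differentiate \eqref{W1}, \eqref{W2}, \eqref{W3} with respect to $t$ and choose the test functions $w=p_t$, $\zz=\s_t$, $\vv=\uu_t$. Adding the resulting identities and cancelling the cross terms $(\nabla\cdot\uu_t,p_t)$ and $(\s_t,\uu_t)$ exactly as in the proof of Theorem \ref{ph}(ii), I expect to arrive at
\beq
\tfrac12\frac{d}{dt}\norm{p_t}^2 + \Big(\tfrac{d}{dt}\big[K(|\s|)\s\big],\,\s_t\Big) = (f_t,p_t).
\eeq
The key structural step is to show the middle term is nonnegative. Writing $B(y)=K(|y|)y$, its Jacobian is $DB(y)=K(|y|)I+\frac{K'(|y|)}{|y|}\,y\otimes y$, and \eqref{K3} together with $(y\cdot\zeta)^2\le|y|^2|\zeta|^2$ gives $\zeta^\top DB(y)\zeta\ge (1-a)K(|y|)|\zeta|^2\ge 0$ for every $\zeta$; this is precisely the infinitesimal form of the monotonicity inequality \eqref{Mono}. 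Hence $\big(\tfrac{d}{dt}[K(|\s|)\s],\s_t\big)\ge (1-a)\int_\Omega K(|\s|)|\s_t|^2\,dx\ge 0$, which I simply discard. Combining this with Cauchy's inequality applied to $(f_t,p_t)$ yields the differential inequality
\beq
\frac{d}{dt}\norm{p_t}^2 \le \norm{p_t}^2 + \norm{f_t}^2 .
\eeq

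To pass from this to a pointwise bound on $[t_0,T]$ without any information on $\norm{p_t(0)}$, I would integrate the inequality over an arbitrary interval $(s,t)$ with $0<s<t_0\le t$, obtaining $\norm{p_t(t)}^2\le \norm{p_t(s)}^2+\int_0^t(\norm{p_t}^2+\norm{f_t}^2)\,d\tau$, and then integrate in $s$ over $(0,t_0)$ and divide by $t_0$. This produces the singular factor $t_0^{-1}\int_0^{t_0}\norm{p_t(s)}^2\,ds$ together with $\int_0^t\norm{p_t}^2\,d\tau$ and $\int_0^t\norm{f_t}^2\,d\tau$. Finally, invoking Theorem \ref{ph}(ii) in the two forms $\int_0^{t_0}\norm{p_t}^2\le C\mathcal M(t_0)$ and $\int_0^t\norm{p_t}^2\le C\mathcal M(t)$ (both obtained by discarding the nonnegative terms $\int_\Omega H\,dx$ and $\norm{p}^2$ on the left of \eqref{Bpt}) gives exactly \eqref{Bp-t}.

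The main obstacle is the averaging step: it is what circumvents the lack of control on $p_t$ at $t=0$ and is responsible for the singular factor $t_0^{-1}$. Once the a priori integral bound of part (ii) is available, no Gronwall loop is needed, since the $\norm{p_t}^2$ term on the right of the differential inequality is controlled directly by that integral; the positive-semidefiniteness of $DB$ used to drop the nonlinear contribution is routine given \eqref{K3}.
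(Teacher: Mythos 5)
Your proposal is correct and follows essentially the same route as the paper: differentiate \eqref{weakform} in time, test with $(p_t,\s_t,\uu_t)$ to obtain the differential inequality $\frac{d}{dt}\norm{p_t}^2\le \norm{p_t}^2+\norm{f_t}^2$, integrate from $t'$ to $t$, average over $t'\in(0,t_0)$ to produce the factor $t_0^{-1}$, and close with the integral bound \eqref{Bpt} from Theorem \ref{ph}(ii). Your positive-semidefiniteness argument via the Jacobian $DB(y)=K(|y|)I+\frac{K'(|y|)}{|y|}\,y\otimes y$ is merely a repackaging of the paper's step, which bounds $\bigl|\bigl(K'(|\s|)\frac{\s\cdot\s_t}{|\s|}\s,\s_t\bigr)\bigr|\le a\norm{K^{\frac12}(|\s|)\s_t}^2$ using \eqref{K3} and Cauchy's inequality, yielding the same retained coefficient $(1-a)$ before the nonnegative term is discarded.
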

\begin{proof}
We differentiate \eqref{weakform} with respect time $t$ to obtain  
\begin{subequations}
\begin{align}
&\label{ds1}    (  p_{tt}, w) +  \left(\nabla\cdot\uu_t, w\right) =(f_t, w) , &&\forall w\in W, \\
&\label{ds2}     (\uu_t, \zz)  +  ( K(|\s|)\s_t, \zz) +\left(K'(|\s|) \frac{\s\cdot \s_t}{|\s|}\s,\zz \right)=0, &&\forall \zz\in \tilde W, \\
 &\label{ds3}     (\s_t,\vv_h)  + (  p_t , \nabla\cdot  \vv )= 0, &&\forall \vv\in V.
\end{align}
\end{subequations}
For each $t\in[t_0,T]$, taking $w= p_t$, $\zz=\s_t$ and $\vv =\uu_t$, summing three resultant equations we obtain 
\beq\label{I}
\begin{split}
\frac12\ddt  \norm{  p_t}^2+ \norm{K^{\frac 12}(|\s|)\s_t}^2 &= - \left(K'(|\s|) \frac{\s\cdot \s_t}{|\s|}\s,\s_t \right)+(f_t, p_t).
\end{split}
\eeq
Using \eqref{K3} and Cauchy's inequality to bound the right hand-side of \eqref{I} give
\beq\label{I1}
\left| -\left(K'(|\s|) \frac{\s\cdot \s_t}{|\s|}\s,\s_t \right)+(f_t, p_t)\right|\le  a\norm{K^{\frac 12}(|\s|) \s_t}^2+ \frac 12\left(\norm{f_t}^2 +\norm{ p_t}^2 \right).
\eeq
Thus
\beqs
\begin{split}
\frac 12 \ddt \norm{ p_t}^2+ (1-a) \norm{K^{\frac 12}(|\s|)\s_t}^2 \le\frac 12 \left(\norm{f_t}^2 +\norm{ p_t}^2\right). 
\end{split}
\eeqs
 Ignoring the the nonnegative term of the left hand side in previous inequality we find that  
 \beq\label{DEpht}
\frac d{dt}  \norm{  p_t}^2 \le   \norm{ p_t}^2+\norm{f_t}^2.
\eeq
For $t\ge t'>0$,  integrating \eqref{DEpht} from $t'$ to $t$ yields 
\begin{align*}
\norm{p_t}^2&\le \norm{  p_t(t')}^2 + \int_{t'}^t \norm{ p_t}^2 d\tau +\int_0^t \norm{f_t}^2 d\tau\\
 &\le \norm{  p_t(t')}^2 + \int_{0}^t \norm{ p_t}^2 d\tau +\int_0^t \norm{f_t}^2 d\tau .
\end{align*}
Now integrating in $t'$ from $0$ to $t_0$,  
\beq\label{ptt0}
\begin{split}
t_0\norm{  p_t}^2&\le \int_0^{t_0} \norm{  p_t(t')}^2 + t_0\Big\{\int_{0}^t \norm{ p_t}^2 d\tau+ \int_0^t\norm{f_t}^2 d\tau  \Big\}.
\end{split}
\eeq
Combining \eqref{ptt0} and \eqref{Bpt} leads to \eqref{Bpht}.
The proof is complete.  
\end{proof}

Using $L^2$-projection, $H$-div projection and above arguments, the similar results for solution of discrete problem are established as following.    

\begin{theorem}\label{pspt} Let $(p_h,\s_h,\uu_h)$ be the solution to the semidiscrete  problem \eqref{semidiscreteform}.  We have

{\rm (i)}  
\beq\label{B4ph}
\sup_{t\in[0,T]}\norm{p_h(t)}^2  \le  \norm {p^0} +\int_0^T \norm {f(t)} dt. 
\eeq 

{\rm (ii)} For any $t\in (0,T)$, 
\beq\label{Bsh}
 \begin{aligned}
 \norm{\s_h(t)}^{\beta}_{L^{\beta}(\Omega)} +\norm{\uu_h(t)} \le C \Big\{\norm{p^0}^2 + \Big(\int_0^T \norm{f(t)} dt\Big)^2 &\\
    + \int_0^t e^{-(t-\tau)}\norm{f(\tau)}^2 d\tau +1\Big\}&.
\end{aligned}
 \eeq 
 
{\rm (iii)} Let $0<t_0<T$.  For any $t\in [t_0, T],$ we have 
\beq\label{Bpht}
\norm{  p_{h,t}(t)}^2\le  C t_0^{-1}\mathcal M(t_0)+ C \left(\mathcal M(t)+\int_0^t  (\norm{f_t(\tau)}^2 d\tau\right),
\eeq
where $\mathcal M(\cdot)$ is defined as in \eqref{BM}. 
\end{theorem}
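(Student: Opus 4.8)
The plan is to transcribe the energy arguments of Theorems~\ref{ph} and \ref{phderv} into the discrete spaces $W_h\times\tilde W_h\times V_h$. The single fact that makes this transfer legitimate is that the finite element spaces do not depend on $t$, so that $p_{h,t}\in W_h$, $\s_{h,t}\in\tilde W_h$, $\uu_{h,t}\in V_h$ are admissible test functions and \eqref{semidiscreteform} may be differentiated in time and tested against these derivatives; the pointwise nonlinear identities $K(|\s_h|)\s_h\cdot\s_{h,t}=\frac12\ddt H(\s_h)$ and the derivative estimate \eqref{K3} are algebraic in the integrand and hold verbatim. The $L^2$-projection enters only through the initial datum $p_h^0=\pi p_0$, whose $L^2$-stability gives $\norm{p_h^0}\le\norm{p^0}$, so that the same $\mathcal M$ governs the discrete data. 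For~(i) I would test \eqref{Semi1}, \eqref{Semi2}, \eqref{Semi3} with $w_h=p_h$, $\zz_h=\s_h$, $\vv_h=\uu_h$; as in the continuous case the symmetry $(\uu_h,\s_h)=(\s_h,\uu_h)$ together with \eqref{Semi2} and \eqref{Semi3} identifies $(p_h,\nabla\cdot\uu_h)=\norm{K^{\frac12}(|\s_h|)\s_h}^2$, and substitution into \eqref{Semi1} yields the discrete analogue of \eqref{maineq}. Integrating, taking the supremum in $t$, and applying the elementary quadratic inequality used for \eqref{B4p} then gives \eqref{B4ph}.

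For~(ii) I would first derive the discrete counterpart of \eqref{Bpt} as an intermediate bound. Choosing $w_h=p_{h,t}$ in \eqref{Semi1}, $\zz_h=\s_{h,t}$ in \eqref{Semi2}, and differentiating \eqref{Semi3} in time before inserting $\vv_h=\uu_h$ produces the discrete version of \eqref{ptest}; adding it to the discrete \eqref{maineq} gives the discrete \eqref{neweq}. Using Cauchy's inequality and $\norm{K^{\frac12}(|\s_h|)\s_h}^2\ge\frac12\int_\Omega H(\s_h)\,dx$ leads to a differential inequality for $\int_\Omega H(\s_h)\,dx$; Gronwall's inequality, the comparison \eqref{B4s} applied to $\s_h$, and the bound \eqref{B4u} read from \eqref{Semi2} then yield \eqref{Bsh}, the initial term $\int_\Omega H(\s_h^0)\,dx$ being discarded exactly as in \eqref{res1}.

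For~(iii) I would differentiate the three equations of \eqref{semidiscreteform} in $t$ and test with $w_h=p_{h,t}$, $\zz_h=\s_{h,t}$, $\vv_h=\uu_{h,t}$; summing gives the discrete analogue of \eqref{I}. Bounding the $K'$ term by $a\norm{K^{\frac12}(|\s_h|)\s_{h,t}}^2$ through \eqref{K3} and estimating $(f_t,p_{h,t})$ by Cauchy's inequality absorbs the $\s_{h,t}$ contribution and leaves the discrete \eqref{DEpht}, namely $\ddt\norm{p_{h,t}}^2\le\norm{p_{h,t}}^2+\norm{f_t}^2$. Integrating this from $t'$ to $t$, averaging over $t'\in(0,t_0)$, and finally inserting the discrete counterpart of \eqref{Bpt} to control $\int_0^{t_0}\norm{p_{h,t}(t')}^2\,dt'$ and $\int_0^t\norm{p_{h,t}}^2\,d\tau$ by $\mathcal M$ produces \eqref{Bpht}.

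The step I expect to require the most care is not any single estimate but the control of the discrete initial gradient $\s_h^0$, which in the semidiscrete scheme is defined implicitly by \eqref{Semi3} at $t=0$ rather than prescribed: one must verify, using the stability of the $H$-div and $L^2$-projections, that $\int_\Omega H(\s_h^0)\,dx$ is dominated by $\norm{\s^0}^{\beta}_{L^{\beta}(\Omega)}$, so that the continuous $\mathcal M$ genuinely bounds the discrete data. Since this term is either discarded (in \eqref{Bsh}) or enters only through $\mathcal M$, no new analytic difficulty appears, and the remainder of the argument is a line-by-line repetition of the continuous proofs.
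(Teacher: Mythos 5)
Your proposal is correct and follows exactly the route the paper intends: the paper gives no written proof of Theorem~\ref{pspt}, remarking only that the $L^2$- and $H$-div projections together with the arguments of Theorems~\ref{ph} and~\ref{phderv} carry over, which is precisely your line-by-line transcription with the same test functions ($p_h$, $\s_h$, $\uu_h$, then $p_{h,t}$, $\s_{h,t}$, and the time-differentiated system). Your closing observation about controlling the implicitly defined discrete initial gradient $\s_h^0$ via stability of the projections is a legitimate point of care that the paper glosses over, and your treatment of it is consistent with how $\mathcal M$ enters the discrete bounds.
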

 \section{Error analysis}
     
In this section, we will establish the error estimates between the analytical solution and approximation solution in several norms. 
In the below development we discuss error estimates for the case conductivity tensor $K(\cdot)$ degenerating. We assume the solutions, 
\beqs
p\in L^\infty(0,T; H^{r+1}(\Omega)) ,\quad \s \in L^2(0,T; (W^{r+1,\beta}(\Omega))^d).
\eeqs         


 \subsection{Error estimate for semidiscrete method} 
We find the error bounds in the semidiscrete method by
comparing the computed solution to the projections of the true
solutions.  To do this, we restrict the test functions
in~\eqref{weakform} to the finite dimensional spaces.   
Let 
\begin{align*}
 p_h - p =(p_h-\pi p ) + (\pi p - p) \equiv  \vartheta + \theta,\\
  \s_h -\s = (\s_h-\pi \s ) + (\pi \s - \s) \equiv  \eta+ \zeta,\\
  \uu_h -\uu = (\uu_h-\Pi \uu ) + (\Pi \uu - \uu) \equiv  \rho+ \varrho.
\end{align*}
Properties of projections in \eqref{prjpi} and \eqref{prjPi} yield 
\begin{align}
\label{Btheta}
&\norm{\theta }_{L^\alpha(\Omega)}  \le Ch^m \norm{ p }_{m,\alpha},&&\forall p\in W^{m,\alpha}(\Omega),\\
\label{Bzeta}
&\norm{\zeta }_{L^\alpha(\Omega)}  \le Ch^m \norm{\s }_{m,\alpha},  &&\forall \s\in  (W^{m,\alpha}(\Omega))^d,\\
\label{Bvarrho}
&\norm{\varrho }_{L^\alpha(\Omega)}  \le Ch^m \norm{\uu }_{m,\alpha}, &&\forall \uu\in (W^{m,\alpha}(\Omega))^d. 
 \end{align}
for all $1\le m\le r+1$, $1\le \alpha\le \infty.$

Let $0<t_0<T$,
\begin{align*}
\mathcal A&=1+\norm{p^0}^2 + \left(\int_0^T \norm{f(t)} dt\right)^2 + \int_0^T \norm{f(t)}^2 dt.\\ 
\mathcal B &=C t_0^{-1}\mathcal M(t_0)+ C \left( \mathcal M(T)+\int_0^T \norm{f_t(t)}^2 dt\right). 
\end{align*}
\begin{theorem}\label{mainres} Assume $(p^0,\uu^0,\s^0 )\in W\times V\times \tilde W$ and $(p_h^0,\uu^0_h,\s^0_h )\in W_h\times V_h\times \tilde W_h$. Let $(p, \uu,\s)$ solve problem \eqref{weakform} and $(p_h, \uu_h,\s_h)$ solve the semidiscrete mixed finite element approximation \eqref{semidiscreteform}.  Then there is a positive constant $C$ such that for each $t\in (0,T)$
\beq \label{mi4a}
\norm {(p_h - p)(t)  }\le Ch^{r+1}\norm{p(t)} + C   \mathcal A^{\frac12} h^{\frac {r+1}2}\sqrt{\int_0^t\norm{\s(\tau)}_{L^\beta(\Omega)}d\tau} . 
\eeq
Furthermore if $\s \in L^2(0,T; (W^{r+1,\delta}(\Omega))^d)$ then 
\beq\label{impr} 
\norm {(p_h - p)(t)  }\le Ch^{r+1} \norm{p(t)} + C \mathcal A^{\frac \lambda2} h^{r+1}\sqrt{\int_0^t \norm{\s(\tau)}_{L^{\delta}(\Omega) }^2 d\tau}. 
\eeq
\end{theorem}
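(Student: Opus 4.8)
The plan is to estimate the "computed minus projection" part $\vartheta = p_h - \pi p$ of the pressure error by a weighted energy argument, and then recover $\norm{p_h-p}$ from the triangle inequality $\norm{p_h-p}\le\norm{\vartheta}+\norm{\theta}$, where $\norm{\theta}\le Ch^{r+1}\norm{p}_{r+1}$ by \eqref{Btheta}; this projection part already supplies the optimal first term in both \eqref{mi4a} and \eqref{impr}. First I would subtract \eqref{semidiscreteform} from \eqref{weakform} restricted to the finite element spaces, insert the decompositions $p_h-p=\vartheta+\theta$, $\s_h-\s=\eta+\zeta$, $\uu_h-\uu=\rho+\varrho$, and use the defining orthogonalities $(\theta_t,w_h)=0$, $(\theta,\nabla\cdot\vv_h)=0$ (the projection $\pi$) and $(\nabla\cdot\varrho,w_h)=0$ (the projection $\Pi$). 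This gives the three error relations $(\vartheta_t,w_h)+(\nabla\cdot\rho,w_h)=0$, $(\rho+\varrho,\zz_h)+(K(|\s_h|)\s_h-K(|\s|)\s,\zz_h)=0$ and $(\eta+\zeta,\vv_h)+(\vartheta,\nabla\cdot\vv_h)=0$ for $w_h\in W_h$, $\zz_h\in\tilde W_h$, $\vv_h\in V_h$. Taking $w_h=\vartheta$ and $\vv_h=\rho$ in the first and third and subtracting cancels the divergence terms, yielding $(\vartheta_t,\vartheta)=(\eta+\zeta,\rho)$; eliminating $(\rho,\eta)$ through the second relation with $\zz_h=\eta$ and writing $\eta=(\s_h-\s)-\zeta$ produces the identity
\beqs
\tfrac12\ddt\norm{\vartheta}^2 + (K(|\s_h|)\s_h-K(|\s|)\s,\,\s_h-\s) = -(\varrho,\eta) + (K(|\s_h|)\s_h-K(|\s|)\s,\,\zeta) + (\zeta,\rho).
\eeqs
The monotonicity property \eqref{Monoprop} bounds the second term on the left below by $C\omega\norm{\s_h-\s}_{L^\beta}^2$ with $\omega=(1+\max\{\norm{\s}_{L^\beta},\norm{\s_h}_{L^\beta}\})^{-a}$; this weighted coercivity in the weak norm $L^\beta$ is the engine of the whole estimate.

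The next step is to control the three terms on the right. The a priori bounds \eqref{Bs} and \eqref{Bsh} give $\norm{\s}_{L^\beta}^\beta,\ \norm{\s_h}_{L^\beta}^\beta\le C\mathcal A$, whence $\omega^{-1}\le C\mathcal A^{a/\beta}=C\mathcal A^{\lambda}$ (using $2-\beta=a$), and this is precisely where the factors $\mathcal A^{\lambda/2}$, and after crude bounding $\mathcal A^{1/2}$, in the conclusions originate. The term $(\zeta,\rho)$ and the piece $(\varrho,\eta)=(\varrho,\s_h-\s)-(\varrho,\zeta)$ are handled by Cauchy--Schwarz and Hölder with the conjugate pair $(\beta,\delta)$, bounding $\norm{\varrho}_{L^\delta}$ and $\norm{\zeta}$ by the projection estimates \eqref{Bzeta}, \eqref{Bvarrho}, while the non-small factors ($\norm{\s_h-\s}_{L^\beta}$, or the a priori $L^2$-bounds on $\uu_h$ and $\uu$) are controlled by \eqref{Bs}, \eqref{Bsh}; wherever $\norm{\s_h-\s}_{L^\beta}$ appears it is split off by Young's inequality against the coercive term $\omega\norm{\s_h-\s}_{L^\beta}^2$. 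For the nonlinear term I would use the growth bound $|K(\xi)\xi|\le c_2\xi^{1-a}$ from \eqref{K2} together with the identity $(1-a)\delta=\beta$ to obtain $\norm{K(|\s_h|)\s_h-K(|\s|)\s}_{L^\delta}\le C(\norm{\s_h}_{L^\beta}^{1-a}+\norm{\s}_{L^\beta}^{1-a})\le C\mathcal A^{(1-a)/\beta}$.

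Everything then reduces to the decisive choice in bounding $(K(|\s_h|)\s_h-K(|\s|)\s,\zeta)$, which I expect to be the main obstacle and the source of the dichotomy between \eqref{mi4a} and \eqref{impr}. Under the stronger hypothesis $\s\in L^2(0,T;(W^{r+1,\delta}(\Omega))^d)$ I pair the Lipschitz bound \eqref{Lips} with Hölder to get $C\norm{\s_h-\s}_{L^\beta}\norm{\zeta}_{L^\delta}\le C\norm{\s_h-\s}_{L^\beta}h^{r+1}\norm{\s}_{r+1,\delta}$ and then absorb the $\norm{\s_h-\s}_{L^\beta}$ factor into $\omega\norm{\s_h-\s}_{L^\beta}^2$ by Young, at the price $\omega^{-1}\le C\mathcal A^{\lambda}$; this leaves $h^{2(r+1)}$ on the right and, after integrating in time and taking square roots, the full-order estimate \eqref{impr}. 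Without the extra integrability only $\norm{\zeta}\le Ch^{r+1}\norm{\s}_{r+1,\beta}$ is available, so absorption is impossible; instead I bound the term directly by $\norm{K(|\s_h|)\s_h-K(|\s|)\s}_{L^\delta}\norm{\zeta}\le C\mathcal A^{(1-a)/\beta}h^{r+1}\norm{\s}_{r+1,\beta}$, which contributes only a single power $h^{r+1}$ to $\norm{\vartheta}^2$ and hence the half-order factor $h^{(r+1)/2}$ of \eqref{mi4a}. In either case I would finish by integrating the identity from $0$ to $t$ (using $\vartheta(0)=p_h^0-\pi p^0=0$), discarding the nonnegative coercive term, applying Gronwall's inequality to $\norm{\vartheta}^2$, and combining with $\norm{\theta}$ by the triangle inequality. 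The delicate points are the exponent bookkeeping matching the Hölder pair $(\beta,\delta)$ to the degeneracy exponent $a$, and the cross term $(\zeta,\rho)$, whose factor $\rho=\uu_h-\Pi\uu$ carries no smallness and must be controlled through the a priori $L^2$-bounds on $\uu_h$ and $\uu$ rather than by a projection estimate.
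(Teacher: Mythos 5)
Your overall strategy coincides with the paper's: the same three-way splitting via $\pi$ and $\Pi$, the same summed error identity, the monotonicity lemma \eqref{Monoprop} supplying weighted coercivity $\omega\norm{\s_h-\s}^2_{L^\beta(\Omega)}$ with $\omega^{-1}\le C\mathcal A^\lambda$ as in \eqref{invomega}, and the same dichotomy for the nonlinear pairing with $\zeta$ --- the growth bound $K(\xi)\xi\le C\xi^{\beta-1}$ for \eqref{mi4a}, versus the Lipschitz bound \eqref{Lips} with H\"older in the pair $(\beta,\delta)$ and Young absorption at the price $\mathcal A^\lambda$ for \eqref{impr} --- followed by integration from $\vartheta(0)=0$. (The Gronwall step you invoke at the end is superfluous: after absorption nothing involving $\norm{\vartheta}^2$ remains on the right-hand side, and the paper simply integrates and drops the coercive term.)

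The genuine gap is your treatment of the cross terms $(\zeta,\rho)$ and $(\varrho,\eta)$. In the paper's argument these never appear: $\zeta=\pi\s-\s$ is $L^2$-orthogonal to $\tilde W_h$, which kills $(\zeta,\rho)$, and in the second error equation the velocity difference is paired with $\eta\in\tilde W_h$, so the $L^2$-projection property of $\pi$ on $\tilde W$ lets one compare $\uu_h$ with $\pi\uu$ and no $\varrho$-term enters; the identity collapses to \eqref{eqerr},
\beqs
\frac12\ddt\norm{\vartheta}^2+\left( K(|\s_h|)\s_h-K(|\s|)\s,\ \s_h-\s\right)=\left( K(|\s_h|)\s_h-K(|\s|)\s,\ \zeta\right).
\eeqs
You instead keep both cross terms and propose to control $\rho$ ``through the a priori $L^2$-bounds on $\uu_h$ and $\uu$.'' That destroys the optimal rate: with $\norm{\rho}=O(1)$, the term $(\zeta,\rho)$ contributes $O(h^{r+1})$ to the differential inequality for $\norm{\vartheta}^2$, so after integrating and taking square roots you can never conclude better than $h^{(r+1)/2}$ --- consistent with \eqref{mi4a}, but fatal for the full-order estimate \eqref{impr}. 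Likewise your Young absorption of $(\varrho,\s_h-\s)$ needs $\norm{\varrho}_{L^\delta(\Omega)}\le Ch^{r+1}\norm{\uu}_{r+1,\delta}$, i.e.\ regularity of $\uu$ that is not among the theorem's hypotheses. The repair within your framework is either to invoke the projection orthogonalities so the cross terms vanish identically (the paper's route), or to first prove $\norm{\rho}_{L^\beta(\Omega)}\le C\norm{\s_h-\s}_{L^\beta(\Omega)}$ by testing the second error equation with $\zz_h$ proportional to $|\rho|^{\beta-2}\rho$ together with \eqref{Lips} --- exactly what the paper does later in Theorem \ref{suerr} --- and then absorb $(\zeta,\rho)\le\norm{\zeta}_{L^\delta(\Omega)}\norm{\rho}_{L^\beta(\Omega)}$ into the coercive term by Young. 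Apart from this, your exponent bookkeeping ($(1-a)\delta=\beta$, $\omega^{-1}\le C\mathcal A^\lambda$, the factors $\mathcal A^{1/2}$ and $\mathcal A^{\lambda/2}$) is correct and matches the paper.
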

\begin{proof} 
Subtracting \eqref{semidiscreteform} from \eqref{weakform} we have the following error equations  
\begin{subequations}\label{Erreq}
\begin{align}
&\label{Erreq1} ( p_{h,t} - p_t , w_h) +  \left(\nabla \cdot(\uu_h -\uu) ,  w_h\right) =0,    &&\forall w_h\in W_h,\\
&\label{Erreq2} (\uu_h - \uu,\zz_h)  + \left( K(|\s_h| )\s_h - K(|\s|) \s, \zz_h \right)=0,  &&\forall \zz_h\in \tilde W_h,\\
&\label{Erreq3}  (\s_h-\s,\vv_h)  + (  p_h - p , \nabla \cdot \vv_h )= 0, 
&&\forall \vv_h\in V_h.
\end{align}
\end{subequations} 

Let take $w_h=\vartheta,$ $\zz_h=\eta $ and $\vv_h=\rho$. Using the projections in \eqref{prjpi} and \eqref{prjPi}, we rewrite \eqref{Erreq} as  
\begin{subequations}\label{eqprj}
\begin{align}
\label{prj1} &( \vartheta_t, \vartheta) +  \left(\nabla \cdot\rho, \vartheta \right) =0, \\
\label{prj2}&(\rho, \eta)  + \left( K(|\s| )\s -K(|\s|)\s  ,  \eta\right)=0,\\ 
\label{prj3} &(\eta,\rho)  + (\vartheta , \nabla\cdot  \rho )= 0.
\end{align}
\end{subequations} 
Summing up three equations \eqref{prj1}--\eqref{prj3} gives  
\beqs
\frac 12  \frac d{dt}\norm {\vartheta}^2 + \left( K(|\s_h| )\s_h -K(|\s|) \s ,  \eta \right) =0.
\eeqs
It is equivalent to
\beq\label{eqerr}
\frac 12  \frac d{dt}\norm {\vartheta}^2 + \left( K(|\s_h| )\s_h - K(|\s|) \s ,  \s_h - \s \right) =\left( K(|\s_h| )\s_h - K(|\s|) \s , \zeta \right).
\eeq
Applying \eqref{Monoprop} to the second term of \eqref{eqerr} we have    
\beq\label{Mn1}
\left( K(|\s_h| )\s_h - K(|\s| \s), \s_h -\s \right)\ge C\omega \norm{\s_h-\s}^2_{L^\beta(\Omega)}
\eeq
with
\beq\label{omega}
\omega = \omega(t) = (1+ \max\{\norm{\s_h(t)}_{L^\beta(\Omega)} ,  \norm{\s(t)}_{L^\beta(\Omega) } \}  )^{-a}.
\eeq

Since $K(|\xi| )\xi \le C\xi^{\beta-1} $, the right hand side of \eqref{eqerr} is bounded by 
\beqs
\begin{aligned}
\left| ( K(|\s_h| )\s_h - K(|\s|) \s, \zeta) \right|&\le C\left(|\s_h|^{\beta-1} +|\s|^{\beta-1},|\zeta|\right)\\
&\le C \left((\norm{\s_h}_{L^\beta(\Omega)}^{\beta-1}+\norm{\s}_{L^\beta(\Omega)}^{\beta-1} \right)  \norm{\zeta}_{L^\beta(\Omega)}\\
&\le C\left(1+ \norm{\s_h}_{L^\beta(\Omega)}^{\beta} +  \norm{\s}_{L^\beta(\Omega)}^{\beta}  \right)\norm{\zeta}_{L^{\beta}(\Omega)}.
\end{aligned}
\eeqs
Due to \eqref{Bs} and \eqref{Bsh},   
\beq\label{invome}
\begin{split}
1+ \norm{\s_h}_{L^\beta(\Omega)}^{\beta} +  \norm{\s}_{L^\beta(\Omega)}^{\beta}&\le C\left[1+\norm{p^0}^2 + \left(\int_0^T \norm{f} dt\right)^2+ \int_0^t e^{-(t-\tau)}\norm{f}^2 d\tau \right] \\
 &\le C\mathcal A.
\end{split}
\eeq
Hence
\beq\label{mi1a}
\left| ( K(|\s_h| )\s_h - K(|\s|) \s, \zeta) \right| \le C\mathcal A\norm{\zeta}_{L^{\beta}(\Omega)} 
\eeq
Combining \eqref{eqerr}, \eqref{Mn1} and \eqref{mi1a} leads to  
\beq\label{keya}
\begin{split}
\frac d{dt}\norm {\vartheta}^2 +  \omega\norm{\s_h -\s}^2_{L^\beta(\Omega)} 
\le  C\mathcal A\norm{\zeta}_{L^{\beta}(\Omega)} .
\end{split}
\eeq
Integrating \eqref{keya} in time, using $\vartheta(0)=0$,  we have
\beq\label{keyb}
\norm {\vartheta}^2 +\int_0^t  \omega\norm{\s_h -\s}^2_{L^\beta(\Omega)} d\tau\le C \mathcal A\int_0^t  \norm{\zeta}_{L^\beta(\Omega)}d\tau .
\eeq 
Ignoring the second term of \eqref{keyb} and using the triangle inequality     
$\norm{p_h -  p}\le \norm{\vartheta}+\norm{\theta}$
 we obtain
 \beq\label{mi2a}
\norm {p_h - p}^2 \le \norm{\theta}^2 + C\mathcal A \int_0^t \norm{\zeta}_{L^\beta(\Omega)}d\tau,
\eeq
which proves \eqref{mi4a}.

 Under the assumption more on the regularity of solution we bound the right hand side of \eqref{eqerr} using \eqref{Lips}, H\"older and Young's inequality to obtain   
\beq\label{mi1aa}
\begin{aligned}
\left| ( K(|\s_h| )\s_h - K(|\s|) \s, \zeta) \right|&\le C(|\s_h-\s|,|\zeta|)\\
&\le C \norm{\s_h-\s}_{L^\beta(\Omega)}\norm{\zeta}_{L^{\delta}(\Omega) }  \\
&\le \varep \norm{\s_h-\s}_{L^\beta(\Omega)}^2 +C\varep^{-1} \norm{\zeta}_{L^{\delta}(\Omega) }^2
\end{aligned}
\eeq
for all $\varep>0$. 

From \eqref{eqerr}, \eqref{Mn1} and \eqref{mi1aa}, we find that 
\beqs
\begin{split}
\frac d{dt}\norm {\vartheta}^2 +  C\omega\norm{\s_h -\s}^2_{L^\beta(\Omega)} 
\le \varep \norm{\s_h-\s}_{L^\beta(\Omega)}^2 +C\varep^{-1} \norm{\zeta}_{L^{\delta} (\Omega)}^2.
\end{split}
\eeqs
Due to \eqref{omega} and \eqref{invome},   
\beq\label{invomega}
\omega^{-1}\le C_1\left(1+ \norm{\s}_{L^\beta(\Omega)}^{\beta} +  \norm{\s_h}_{L^\beta(\Omega)}^{\beta}  \right)^\lambda\le C_1\mathcal A^\lambda.
\eeq
Thus 
\beq\label{keyaa}
\begin{split}
\frac d{dt}\norm {\vartheta}^2 +  C(C_1\mathcal A^\lambda)^{-1}\norm{\s_h -\s}^2_{L^\beta(\Omega)} 
\le\varep \norm{\s_h-\s}_{L^\beta(\Omega)}^2 +C\varep^{-1} \norm{\zeta}_{L^{\delta}(\Omega) }^2.
\end{split}
\eeq
Selecting $\varep = \frac C{2C_1\mathcal A^\lambda}$, integrating \eqref{keyaa} in time, we have
\beq\label{keyb1}
\norm {\vartheta}^2 +(C\mathcal A^\lambda)^{-1}\int_0^t \norm{\s_h -\s}^2_{L^\beta(\Omega)} d\tau\le C\mathcal A^\lambda \int_0^t  \norm{\zeta}_{L^{\delta}(\Omega) }^2 d\tau .
\eeq 
Dropping the second term in \eqref{keyb1} and using triangle inequality $ \norm {p_h - p} \le \norm{\theta}+\norm{\vartheta} $ in \eqref{keyb1} shows that   
 \beqs
\norm {p_h - p}^2 \le C\left( \norm{\theta}^2 + \mathcal A^\lambda \int_0^t \norm{\zeta}_{L^{\delta}(\Omega)}^2d\tau\right).
\eeqs
This together \eqref{Bzeta} and \eqref{Bvarrho} gives \eqref{impr}.
 The proof is complete.    
\end{proof}

The $L^2$-error estimate and the inverse estimate enable us to have the $L^\infty$- error estimate as the following 
\begin{theorem}\label{pinferr}  Assume $(p^0,\uu^0,\s^0 )\in W\times V\times \tilde W$ and $(p_h^0,\uu^0_h,\s^0_h )\in W_h\times V_h\times \tilde W_h$. Let $(p, \uu,\s)$ solve problem \eqref{weakform} and $(p_h, \uu_h,\s_h)$ solve the semidiscrete mixed finite element approximation \eqref{semidiscreteform}. If $p\in L^\infty(0,T, W^{r+1,\infty}(\Omega))$ then there exists a positive constant $C$ such that for each $t\in(0,T)$, 
\beq\label{errlinf}
\norm{(p-p_h)(t)}_{L^\infty(\Omega)} \le Ch^{r+1}\norm{p(t)}_{r+1, \infty} + C\mathcal A^{\frac12} h^{\frac {r-1}2}\sqrt{\int_0^t  \norm{\s(t)}_{r+1,\beta}}.
\eeq
Furthermore  if $\s \in L^2(0,T; (W^{r+1,\delta}(\Omega))^d)$ then 
\beq\label{errlinf1}
\norm{(p-p_h)(t)}_{L^\infty(\Omega)} \le Ch^{r+1}\norm{p(t)}_{r+1, \infty} + C\mathcal A^{\frac12} h^{r}\sqrt{\int_0^t  \norm{\s(t)}_{r+1,\delta}^2}.
\eeq
\end{theorem}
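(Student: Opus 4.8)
The plan is to derive the $L^\infty$ bound from the $L^2$ projection-error bound already obtained in Theorem~\ref{mainres}, paying for the passage $L^2 \to L^\infty$ with the standard inverse inequality on the quasi-uniform family $\{\mathcal T_h\}_h$. First I would reuse the splitting $p - p_h = -(\theta + \vartheta)$ with $\theta = \pi p - p$ and $\vartheta = p_h - \pi p \in W_h$, and apply the triangle inequality,
\beqs
\norm{(p - p_h)(t)}_{L^\infty(\Omega)} \le \norm{\theta(t)}_{L^\infty(\Omega)} + \norm{\vartheta(t)}_{L^\infty(\Omega)}.
\eeqs
The first summand is purely approximation-theoretic: the projection estimate \eqref{prjpi} with $\alpha=\infty$ and $m=r+1$ gives $\norm{\theta(t)}_{L^\infty(\Omega)} \le C h^{r+1}\norm{p(t)}_{r+1,\infty}$, which is exactly the first term appearing in both \eqref{errlinf} and \eqref{errlinf1}.

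The core of the argument is the discrete component. Since $\vartheta(t)\in W_h$ is a piecewise polynomial, I would invoke the inverse inequality $\norm{\vartheta}_{L^\infty(\Omega)} \le C h^{-d/2}\norm{\vartheta}$ to trade the $L^\infty$ norm for an $L^2$ norm at the cost of a negative power of $h$; in the dimension relevant here this reads $\norm{\vartheta}_{L^\infty(\Omega)} \le C h^{-1}\norm{\vartheta}$. It then remains to insert the $L^2$ control of $\vartheta$ established \emph{inside} the proof of Theorem~\ref{mainres}. For the first estimate, inequality \eqref{keyb} (after discarding its nonnegative second term) combined with the projection bound \eqref{Bzeta} for $m=r+1,\ \alpha=\beta$ yields $\norm{\vartheta}^2 \le C\mathcal A\, h^{r+1}\int_0^t \norm{\s(\tau)}_{r+1,\beta}\,d\tau$; multiplying its square root by the $h^{-1}$ factor produces precisely the $C\mathcal A^{1/2} h^{(r-1)/2}$ term of \eqref{errlinf}. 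Under the stronger hypothesis $\s\in L^2(0,T;(W^{r+1,\delta}(\Omega))^d)$ I would instead use the sharper bound \eqref{keyb1}, which gives $\norm{\vartheta}^2 \le C\mathcal A^\lambda h^{2(r+1)}\int_0^t \norm{\s(\tau)}_{r+1,\delta}^2\,d\tau$; since $\mathcal A\ge 1$ and $\lambda\in(0,1)$ one may coarsen $\mathcal A^{\lambda/2}\le \mathcal A^{1/2}$, and combining with $h^{-1}$ delivers the $C\mathcal A^{1/2} h^{r}$ term of \eqref{errlinf1}.

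The only genuinely delicate point is the bookkeeping of the powers of $h$: the inverse inequality is the single place where the $L^\infty$ convergence rate is degraded relative to the $L^2$ rate, and one must apply the negative power $h^{-d/2}$ \emph{only} to the discrete part $\vartheta$ and never to $\theta$, which is not a finite-element function. I would also state explicitly that the exponents $h^{(r-1)/2}$ and $h^{r}$ in the theorem correspond to $d=2$; for general $d$ the identical argument produces $h^{(r+1-d)/2}$ and $h^{r+1-d/2}$, respectively. Everything else is a direct assembly of the projection approximation properties \eqref{prjpi}--\eqref{Bvarrho} together with the two $L^2$-error inequalities \eqref{keyb} and \eqref{keyb1} already proved.
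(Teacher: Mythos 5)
Your proposal is correct and follows essentially the same route as the paper: split $p-p_h$ into $\theta+\vartheta$, bound $\theta$ via the $L^\infty$ projection estimate \eqref{prjpi}, and convert the $L^2$ bounds \eqref{keyb} and \eqref{keyb1} on $\vartheta$ into $L^\infty$ bounds by the inverse inequality with factor $h^{-1}$, exactly as in the paper's proof. Your added remark about dimension is apt, since the paper's stated inverse inequality $\norm{\vartheta}_{L^\infty(\Omega)}\le Ch^{-2/q}\norm{\vartheta}_{L^q(\Omega)}$ indeed encodes $d=2$, and your coarsening $\mathcal A^{\lambda/2}\le\mathcal A^{1/2}$ (valid since $\mathcal A\ge 1$) is precisely what the paper does implicitly in \eqref{errlinf1}.
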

\begin{proof}  
For quasi-uniformly of $\mathcal T_h$, the following inverse estimate holds 
\beqs
\norm{\vartheta}_{L^\infty(\Omega)} \le Ch^{-\frac 2 q}\norm{ \vartheta}_{L^q(\Omega)} \quad \text { for all } 1\le q\le\infty.  
\eeqs
 Applying this with $q=2$ and using \eqref{keyb} imply 
 \beq\label{eb}
 \begin{split}
 \norm{\vartheta}_{L^\infty(\Omega)} \le Ch^{-1}\norm{ \vartheta}\le C\mathcal A^{\frac12} h^{-1} \left(\int_0^t  \norm{\zeta}_{L^\beta(\Omega)}\right)^{\frac 12}.
 \end{split}
 \eeq 
 It follows from triangle inequality and \eqref{eb} that
 \beq\label{varthinf}
\begin{split}
\norm{p-p_h}_{L^\infty(\Omega)} &\le \norm{\theta }_{L^\infty(\Omega)} +\norm{\vartheta}_{L^\infty(\Omega)}\\
 &\le \norm{\theta}_{L^\infty(\Omega)} +C\mathcal A^{\frac12} h^{-1} \left(\int_0^t  \norm{\zeta}_{L^\beta(\Omega)}\right)^{\frac 12}.  
\end{split}
\eeq 
Thus \eqref{errlinf} follows by \eqref{varthinf} and \eqref{Bzeta} applying with $\alpha=\infty$ . 

Using \eqref{keyb1} to bound $\norm{\vartheta}_{L^\infty(\Omega)}$ instead of \eqref{keyb} we obtain 
\beq\label{varthinf1}
\norm{p-p_h}_{L^\infty(\Omega)} \le \norm{\theta}_{L^\infty(\Omega)} +C\mathcal A^{\frac12} h^{-1} \left(\int_0^t  \norm{\zeta}_{L^{\delta}(\Omega)}^2\right)^{\frac 12}.  
\eeq 
This and and \eqref{Bzeta} applying with $\alpha=\infty$ give \eqref{varthinf1}. We finish the proof. 
\end{proof}

Return to error estimate for vector gradient of pressure we have the following results  \begin{theorem}\label{suerr} Under the assumptions of Theorem \ref{mainres}. For any $0<t_0\le t\le T$  there is positive constants $C$ independent of $h$ such that

(i)
\beq\label{Bs-sh2a}
\begin{split}
\norm{(\s_h - \s)(t)}_{L^{\beta}(\Omega)}&\le C\mathcal A^{\frac {2\lambda+1}4 }\mathcal B^{\frac 12}h^{\frac{r+1}4}\sqrt{\int_0^t  \norm{\s(\tau)}_{r+1,\beta}d\tau}\\
&+ C\mathcal A^{\frac {\lambda+1}2} h^{\frac{r+1}2}\norm {\s(t)}_{r+1,\beta} .
\end{split}
\eeq
and
 \beq\label{Bu-uh2a}
\begin{split}
\norm{(\uu_h - \uu)(t)}_{L^{\beta}(\Omega)}&\le C\mathcal A^{\frac {2\lambda+1}4 }\mathcal B^{\frac 12}h^{\frac{r+1}4}\sqrt{\int_0^t  \norm{\s(\tau)}_{r+1,\beta}d\tau}\\
 &+ C\mathcal A^{\frac {\lambda+1}2 } h^{\frac{r+1}2}\norm {\s(t)}_{r+1,\beta} + Ch^{r+1} \norm{ \uu(t)}_{r+1,\beta}.
\end{split}
\eeq

 (ii) If $\s \in L^2(0,T; (W^{r+1,\delta}(\Omega))^d)$  then
\beq\label{Bs-sh2b}
\begin{split}
\norm{(\s_h - \s)(t)}_{L^{\beta}(\Omega)}&\le C\mathcal A^{\frac {3\lambda}4}\mathcal B^{\frac 12}h^{\frac{r+1}2}\sqrt{\int_0^t  \norm{\s(\tau)}_{r+1,\lambda}^2 d\tau}\\
&+ C\mathcal A h^{r+1}\norm {\s(t)}_{r+1,\delta} .
\end{split}
\eeq
and
 \beq\label{Bu-uh2b}
\begin{split}
\norm{(\uu_h - \uu)(t)}_{L^{\beta}(\Omega)}&\le C\mathcal A^{\frac {3\lambda}4 }\mathcal B^{\frac 12}h^{\frac{r+1}2}\sqrt{\int_0^t  \norm{\s(\tau)}_{r+1,\delta}^2 d\tau}\\
&+ C\mathcal Ah^{r+1}\norm {\s(t)}_{r+1,\delta} + Ch^{r+1} \norm{ \uu(t)}_{r+1,\beta}.
\end{split}
\eeq

\end{theorem}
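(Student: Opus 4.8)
The plan is to bound $\norm{(\s_h-\s)(t)}_{L^\beta(\Omega)}$ \emph{pointwise} in $t$ from the monotonicity inequality, and then to deduce the velocity bound from it rather than repeating the argument. First I would apply \eqref{Monoprop} to the pair $\s_h,\s$ at the fixed time $t$, giving $C\omega\norm{\s_h-\s}^2_{L^\beta(\Omega)}\le (K(|\s_h|)\s_h-K(|\s|)\s,\s_h-\s)$ with $\omega^{-1}\le C\mathcal A^\lambda$ by \eqref{invomega}. Splitting $\s_h-\s=\eta+\zeta$, the right side becomes $(K(|\s_h|)\s_h-K(|\s|)\s,\eta)+(K(|\s_h|)\s_h-K(|\s|)\s,\zeta)$. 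The second (projection) term is treated exactly as in Theorem~\ref{mainres}: for part (i) by the growth bound $K(\xi)\xi\le C\xi^{\beta-1}$ as in \eqref{mi1a}, and for part (ii) by the Lipschitz estimate \eqref{Lips} with Young's inequality as in \eqref{mi1aa}, the latter producing a $\norm{\zeta}_{L^\delta(\Omega)}$ factor that is partly absorbed on the left.

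The crux is the first term. Using the error equation \eqref{Erreq2} with $\zz_h=\eta\in\tilde W_h$ rewrites it as $-(\uu_h-\uu,\eta)=-(\rho,\eta)-(\varrho,\eta)$. For the leading piece I would exploit the identity $(\rho,\eta)=(\vartheta_t,\vartheta)$, obtained by eliminating $(\nabla\cdot\rho,\vartheta)$ between \eqref{prj3} and \eqref{prj1}. This is where the bound $\mathcal B$ enters: $\norm{\vartheta_t}\le\norm{p_{h,t}}+\norm{\pi p_t}\le C\mathcal B^{1/2}$ by \eqref{Bpht} and \eqref{Bp-t} (and this is exactly why the hypothesis $t\ge t_0$ is imposed, since $\mathcal B$ controls the pressure time-derivative only away from $t=0$), while $\norm{\vartheta}$ is controlled by the already-established $L^2$ bound \eqref{keyb}, namely $\norm{\vartheta}^2\le C\mathcal A\int_0^t\norm{\zeta}_{L^\beta(\Omega)}d\tau$. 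The residual cross term $(\varrho,\eta)$ I would estimate by H\"older in the dual pair $(\delta,\beta)$ after writing $\varrho=(\Pi\uu-\pi\uu)+(\pi\uu-\uu)$ and using $(\pi\uu-\uu,\eta)=0$; the factor $\norm{\eta}_{L^\beta(\Omega)}\le\norm{\s_h-\s}_{L^\beta(\Omega)}+\norm{\zeta}_{L^\beta(\Omega)}$ then lets me absorb its $\norm{\s_h-\s}_{L^\beta(\Omega)}$ part into the left-hand side after inserting $\omega^{-1}\le C\mathcal A^\lambda$.

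Collecting the pieces, multiplying through by $\omega^{-1}\le C\mathcal A^\lambda$, inserting the projection bound \eqref{Bzeta} (i.e. $\norm{\zeta}_{L^\beta(\Omega)}\le Ch^{r+1}\norm{\s}_{r+1,\beta}$, resp. $\norm{\zeta}_{L^\delta(\Omega)}\le Ch^{r+1}\norm{\s}_{r+1,\delta}$ in part (ii)), and taking square roots, I would arrive at \eqref{Bs-sh2a} and \eqref{Bs-sh2b}: the $\mathcal B^{1/2}$ factor and the $\sqrt{\int_0^t\cdots}$ factor arise from the $(\vartheta_t,\vartheta)$ term, and the remaining summand from the $\zeta$- and $(\varrho,\eta)$-contributions, with the powers of $\mathcal A$ tracking the $\omega^{-1}$ weight. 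For the velocity I would derive \eqref{Bu-uh2a} and \eqref{Bu-uh2b} directly from the gradient bounds: since $\uu_h=-\pi\bigl(K(|\s_h|)\s_h\bigr)$ and $\uu=-K(|\s|)\s$, the triangle inequality gives $\norm{\uu_h-\uu}_{L^\beta(\Omega)}\le\norm{(\pi-I)\uu}_{L^\beta(\Omega)}+\norm{(\pi-I)\bigl(K(|\s_h|)\s_h-K(|\s|)\s\bigr)}_{L^\beta(\Omega)}+\norm{K(|\s_h|)\s_h-K(|\s|)\s}_{L^\beta(\Omega)}$, and by $L^\beta$-stability of $\pi$ on the quasi-uniform mesh together with \eqref{Lips} this is $\le C\norm{\s_h-\s}_{L^\beta(\Omega)}+Ch^{r+1}\norm{\uu}_{r+1,\beta}$, precisely the gradient estimate plus the extra projection term in \eqref{Bu-uh2a}, \eqref{Bu-uh2b}.

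The hardest part will be the term $(\uu_h-\uu,\eta)$: one must convert it into the tractable product $(\vartheta_t,\vartheta)$ through the error equations and keep the degeneracy weights $\omega^{-1}\le C\mathcal A^\lambda$ under control during the Young-inequality absorption, all while reconciling the pointwise-in-$t$ monotonicity step with the time-integrated $L^2$ bound \eqref{keyb} that supplies $\norm{\vartheta}$. Careful bookkeeping of these two weights and of the exponent $\lambda$ is what ultimately fixes the precise powers of $\mathcal A$ in the statement.
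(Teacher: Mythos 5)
Your proposal is correct and, at its core, is the same argument as the paper's: the paper likewise starts from the pointwise-in-$t$ monotonicity bound \eqref{Mn1}, converts the $\eta$-pairing into the product $-(p_{h,t}-p_t,\vartheta)=-(\vartheta_t,\vartheta)$ via the error equations (its \eqref{weightnorm}), bounds $\norm{p_{h,t}}+\norm{p_t}$ by the time-derivative estimates of Theorems \ref{phderv} and \ref{pspt} (which is exactly why $t\ge t_0$ is imposed, as you observed), takes $\norm{\vartheta}$ from \eqref{keyb} (resp.\ \eqref{keyb1}), treats the $\zeta$-pairing precisely by \eqref{mi1a} in part (i) and by \eqref{mi1aa} with an absorption $\varepsilon\sim\mathcal A^{-\lambda}$ in part (ii), and then multiplies through by $\omega^{-1}\le C\mathcal A^{\lambda}$ from \eqref{invomega}. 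There are two local deviations worth recording. First, you retain the cross term $(\varrho,\eta)$, which the paper silently sets to zero when writing \eqref{prj2}: strictly, with $\rho=\uu_h-\Pi\uu$, equation \eqref{Erreq2} gives $(\rho,\eta)+\left(K(|\s_h|)\s_h-K(|\s|)\s,\eta\right)=-(\Pi\uu-\pi\uu,\eta)$, so your treatment is the more careful one, although your H\"older estimate in the dual pair $(\delta,\beta)$ requires integrability of $\uu$ in $W^{m,\delta}$ beyond the stated hypotheses and produces an extra higher-order term that does not appear in \eqref{Bs-sh2a}; since $\norm{\Pi\uu-\pi\uu}$ is already $O(h^{r+1})$ (even $O(h)$ with $m=1$ suffices here), this residual is harmless either way. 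Second, for the velocity bound you use $\uu_h=-\pi\bigl(K(|\s_h|)\s_h\bigr)$ together with $L^\beta$-stability of $\pi$ and \eqref{Lips}, whereas the paper tests \eqref{Erreq2} with $\zz_h=\rho^{\beta-1}$ --- a choice that is not actually an element of $\tilde W_h$, since $\beta-1=1-a$ is fractional, so a power of a piecewise polynomial is not piecewise polynomial; your route avoids this formal defect, granted the compatibility $V_h\subseteq\tilde W_h$, which the paper itself uses implicitly anyway (e.g., in asserting $(\zeta,\rho)=0$ when deriving \eqref{prj3}). Both routes land on the same estimates with identical bookkeeping of the weights $\mathcal A^{\lambda}$ and $\mathcal B^{\frac12}$, so your plan is sound and in places tighter than the published proof.
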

\begin{proof}  

(i) Thank to \eqref{Mn1}, \eqref{eqerr} and $L^2$-projection,  
\beq\label{weightnorm}
\begin{split}
\omega\norm{\s_h - \s}^2_{L^{\beta}(\Omega)} &\le \left( K(|\s_h| )\s_h - K(|\s|) \s ,  \s_h - \s \right)\\
& =-(p_{h,t}- p_t, \vartheta)+\left( K(|\s_h| )\s_h - K(|\s|) \s , \zeta \right).
\end{split}
\eeq
 This, \eqref{mi1a} and \eqref{keyb} yield
\beq
\begin{split}
\omega\norm{\s_h - \s}^2_{L^{\beta}(\Omega)}&\le  C (\norm{p_{h,t}} +\norm{p_t} )\norm{\vartheta} + C\mathcal A\norm {  \zeta}_{L^{\beta}(\Omega)}\\
&\le C \mathcal A^{\frac 12} \mathcal B\left(\int_0^t  \norm{ \zeta}_{L^{\beta}(\Omega)}  d\tau\right)^{\frac 12} +C\mathcal A \norm {\zeta}_{L^{\beta}(\Omega)}.
\end{split}
\eeq
Thus 
\beq
\norm{\s_h - \s}^2_{L^{\beta}(\Omega)}\le C \mathcal A^{\frac 12}\mathcal B\omega^{-1} \left(\int_0^t  \norm{ \zeta}_{L^{\beta}(\Omega)}  d\tau\right)^{\frac 12} + C\omega^{-1}\mathcal A \norm {\zeta}_{L^{\beta}(\Omega)}.
\eeq
Due to \eqref{invomega}, we obtain 
\beqs\label{Bs2a}
\norm{\s_h - \s}^2_{L^{\beta}(\Omega)}\le C \mathcal A^{\lambda + \frac 12} \mathcal B \left(\int_0^t  \norm{ \zeta(\tau)}_{L^{\beta}(\Omega)}  d\tau\right)^{\frac 12} + C\mathcal A^{\lambda+1} \norm {\zeta(t)}_{L^{\beta}(\Omega)}.
\eeqs
Hence \eqref{Bs-sh2a} follows by \eqref{Bzeta}.

In \eqref{Erreq2}, let $\zz_h = \rho^{\beta-1}\in \tilde W_h$ and use H\"older's inequality we obtain 
\beq
\begin{aligned}
\norm{ \rho}_{L^{\beta}(\Omega)}^{\beta} &= - \left( K(|\s_h| )\s_h - K(|\s|) \s ,  \rho^{\beta-1} \right) \\
&\le C\left( |\s_h-\s|, \rho^{\beta-1}\right)\\
&\le C\norm{\s_h-\s}_{L^{\beta}(\Omega)}\norm{ \rho}^{\beta-1}_{L^{\beta}(\Omega)},  
\end{aligned}
\eeq
which gives 
\beqs
\norm{ \rho}_{L^{\beta}(\Omega)} \le C \norm{\s_h-\s}_{L^{\beta}(\Omega)}.  
\eeqs
Hence  
\beq\label{diffu-uh}
\begin{split}
\norm{\uu_h -\uu}_{L^{\beta}(\Omega)} &\le C\left( \norm{\rho}_{L^{\beta}(\Omega)} +\norm {\varrho}_{L^{\beta}(\Omega)} \right)\\
&\le C\left( \norm{\s_h-\s}_{L^{\beta}(\Omega)} +\norm {\varrho}_{L^{\beta}(\Omega)}\right). 
\end{split}
\eeq
Using \eqref{Bs-sh2a} and \eqref{Bvarrho} we obtain \eqref{Bu-uh2a}.

(ii)   We bound the right hand side of \eqref{weightnorm} by using Cauchy- Schwartz, triangle inequality and \eqref{mi1aa} to obtain 
\beqs
\begin{split}
\omega\norm{\s_h - \s}^2_{L^{\beta}(\Omega)}&\le  C (\norm{p_{h,t}} +\norm{p_t} )\norm{\vartheta} +\varep\norm{\s_h - \s}^2_{L^{\beta}(\Omega)} + C\mathcal \varep^{-1}\norm { \zeta}_{L^{\delta}(\Omega)}^2\\
&\le C \mathcal A^{\frac \lambda 2}\mathcal B  \left(\int_0^t  \norm{ \zeta}_{L^{\delta}(\Omega)}^2  d\tau\right)^{\frac 12} +\varep\norm{\s_h - \s}^2_{L^{\beta}(\Omega)} + C\mathcal \varep^{-1}\norm { \zeta}_{L^{\delta}(\Omega)}^2.
\end{split}
\eeqs
Then by \eqref{invomega},
\beqs
\begin{split}
(C_1\mathcal A^\lambda)^{-1}\norm{\s_h - \s}^2_{L^{\beta}(\Omega)}&\le C \mathcal A^{\frac \lambda 2}\mathcal B  \left(\int_0^t  \norm{ \zeta}_{L^{\delta}(\Omega)}^2  d\tau\right)^{\frac 12}\\
&\quad +\varep\norm{\s_h - \s}^2_{L^{\beta}(\Omega)} + C\mathcal \varep^{-1}\norm { \zeta}_{L^{\delta}(\Omega)}^2.
\end{split}
\eeqs
Selecting $\varep= \frac1{2C_1\mathcal A^\lambda}$ then
\beq
\norm{\s_h - \s}^2_{L^{\beta}(\Omega)}\le C \mathcal A^{\frac {3\lambda}2}\mathcal B \left(\int_0^t  \norm{ \zeta}_{L^{\delta}(\Omega) }^2  d\tau\right)^{\frac 12} +C\mathcal A^{2\lambda} \norm { \zeta}_{L^{\delta}(\Omega)}^2.
\eeq
This  and \eqref{Bzeta} lead to \eqref{Bs-sh2b}.
 
Inequality \eqref{Bu-uh2b} follows from \eqref{diffu-uh} and \eqref{Bs-sh2b}. 
 The proof is complete.    
\end{proof}    
\subsection{Error analysis for fully discrete scheme} In analyzing this method, proceed in a similar fashion as for the semidiscrete method, we derive a error estimate for the fully discrete scheme. 
Let
$p^n(\cdot) = p(\cdot,t_n)$, $\vv^n(\cdot) = \vv(\cdot,t_n)$ and $\uu^n(\cdot) = \uu(\cdot,t_n)$ be the
true solution evaluated at the discrete time levels.  We will also
denote $\pi p^n \in W_h$, $\pi \s^n \in \tilde W_h$ and $\Pi \uu^n \in V_h$ to be the projections
of the true solutions at the discrete time levels.  

We rewrite \eqref{weakform} with $t=t_n$. Using the definitions of projections and assumption that $\nabla\cdot V_h \subset W_h $, standard
manipulations show that the true solution satisfies the discrete equation
\begin{subequations}\label{fulprjsys}
\begin{align}
\label{fulprj1}& \left( \frac{\pi p^n - \pi p^{n-1}}{\Delta t }, w_h\right)  +  \left(\nabla\cdot \Pi\uu^n, w_h\right) =(f^n, w_h) +(\epsilon^n,w_h) , &\forall w_h\in W_h\\
\label{fulprj2}& (\Pi\uu^n, \zz_h)  + ( K(|\s^n| )\s^n ,\zz_h)=0, &\forall \zz_h\in \tilde W_h,\\
 \label{fulprj3} &(\pi\s^n,\vv_h)  + ( \pi p^n , \nabla\cdot  \vv_h )=0, &\forall \vv_h\in V_h,
\end{align}
\end{subequations}  
where $\epsilon^n$ is the time truncation error of order $\Delta t$. 
\begin{theorem}\label{fulErr} Assume $(\bar p^0,\uu^0,\s^0 )\in W\times V\times \tilde W$ and $(\bar p_h^0,\uu^0_h,\s_h^0 )\in W_h\times V_h\times \tilde W_h$. Let $(p, \uu,\s)$ solve problem \eqref{weakform} and $(p_h^n, \uu_h^n,\s_h^n)$ solve the fully  discrete mixed finite element approximation \eqref{fullydiscreteform} for each time step $n$, $n=1\ldots, N$.  
There exists a positive constant $C$ independent of $h$ and $\Delta t$  such that if  the $\Delta t$ is sufficiently small then 
\beq\label{fulerr2}
\norm{  p_h^m -  p^m} \le C(h^{\frac{r+1}2}+\Delta t) 
\eeq
for $m =1,\dots, N.$

 Moreover if $\s^n\in \left(W^{r+1,\delta}(\Omega)\right)^d$ for  $n=1,\dots, N$ then  
\beq\label{fulerr2a}
\norm{  p_h^m -  p^m} \le C(h^{r+1}+\Delta t) 
\eeq
for $m =1,\dots, N.$
\end{theorem}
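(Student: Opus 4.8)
The plan is to mirror the semidiscrete argument of Theorem \ref{mainres}, replacing the continuous time derivative with the backward difference quotient and carefully absorbing the time-truncation error $\epsilon^n$. First I would set up the fully discrete error decomposition exactly as before, writing $p_h^n - p^n = \vartheta^n + \theta^n$ with $\vartheta^n = p_h^n - \pi p^n$, and similarly $\s_h^n - \s^n = \eta^n + \zeta^n$ and $\uu_h^n - \uu^n = \rho^n + \varrho^n$. Subtracting the projected system \eqref{fulprjsys} from the fully discrete scheme \eqref{fullydiscreteform} yields discrete error equations analogous to \eqref{Erreq}. Choosing the test functions $w_h = \vartheta^n$, $\zz_h = \eta^n$ and $\vv_h = \rho^n$ and summing, the projection orthogonality relations kill the $\theta,\zeta,\varrho$ cross-terms just as in the semidiscrete case, leaving
\beqs
\left(\frac{\vartheta^n - \vartheta^{n-1}}{\Delta t}, \vartheta^n\right) + \left(K(|\s_h^n|)\s_h^n - K(|\s^n|)\s^n, \s_h^n - \s^n\right) = \left(K(|\s_h^n|)\s_h^n - K(|\s^n|)\s^n, \zeta^n\right) - (\epsilon^n, \vartheta^n).
\eeqs

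Next I would apply the monotonicity inequality \eqref{Monoprop} to the second term on the left to produce $C\omega^n \norm{\s_h^n - \s^n}_{L^\beta(\Omega)}^2$, and use the elementary identity $(\vartheta^n - \vartheta^{n-1}, \vartheta^n) \ge \frac12(\norm{\vartheta^n}^2 - \norm{\vartheta^{n-1}}^2)$ for the difference quotient term. The right-hand side is handled exactly as in \eqref{mi1a}: the growth bound $K(|\xi|)\xi \le C\xi^{\beta-1}$ together with the solution bounds \eqref{Bs}, \eqref{Bsh} gives $|(K(|\s_h^n|)\s_h^n - K(|\s^n|)\s^n, \zeta^n)| \le C\mathcal A \norm{\zeta^n}_{L^\beta(\Omega)}$, while $|(\epsilon^n,\vartheta^n)| \le \frac12\norm{\epsilon^n}^2 + \frac12\norm{\vartheta^n}^2$ with $\norm{\epsilon^n} \le C\Delta t$ by the truncation estimate. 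This produces the recursive inequality
\beqs
\norm{\vartheta^n}^2 - \norm{\vartheta^{n-1}}^2 + C\Delta t\, \omega^n \norm{\s_h^n - \s^n}_{L^\beta(\Omega)}^2 \le C\Delta t\,\mathcal A \norm{\zeta^n}_{L^\beta(\Omega)} + C\Delta t(\Delta t)^2 + C\Delta t\,\norm{\vartheta^n}^2.
\eeqs
Summing from $n=1$ to $m$, using $\vartheta^0 = 0$, and bounding $\sum \Delta t \norm{\zeta^n}_{L^\beta(\Omega)}$ by the projection estimate \eqref{Bzeta}, I apply the discrete Gronwall inequality to absorb the $C\Delta t\,\norm{\vartheta^n}^2$ term, which is legitimate once $\Delta t$ is sufficiently small. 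The triangle inequality with $\norm{\theta^m}$ controlled by \eqref{Btheta} then yields \eqref{fulerr2}. For the improved estimate \eqref{fulerr2a} under the stronger regularity $\s^n \in (W^{r+1,\delta}(\Omega))^d$, I would instead estimate the nonlinear term by the Lipschitz bound \eqref{Lips} as in \eqref{mi1aa}, splitting via Young's inequality so that $\varep\norm{\s_h^n - \s^n}_{L^\beta(\Omega)}^2$ is absorbed by the monotonicity term (using $\omega^{-1} \le C\mathcal A^\lambda$ from \eqref{invomega} and choosing $\varep$ accordingly), leaving the full-order $h^{r+1}$ contribution from $\norm{\zeta^n}_{L^\delta(\Omega)}^2$.

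The main obstacle I anticipate is the rigorous treatment of the discrete Gronwall step together with the weight $\omega^n$. Unlike the semidiscrete case, the bound on $\norm{\s_h^n}_{L^\beta(\Omega)}$ that feeds into $\omega^n$ must itself be established uniformly in $n$ for the fully discrete solution; this requires a discrete analogue of the a priori estimates of Theorem \ref{pspt}, which should follow from testing \eqref{fullydiscreteform} with $w_h = p_h^n$ and summing, but needs to be in hand before \eqref{invome} and \eqref{invomega} can be invoked with $\mathcal A$ independent of $\Delta t$. A second delicate point is ensuring the smallness condition on $\Delta t$ needed for the Gronwall absorption is uniform and does not degrade the stated convergence rates; I would track constants to confirm that the threshold depends only on the data through $\mathcal A$ and not on $h$.
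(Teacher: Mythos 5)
Your proposal follows essentially the same route as the paper's own proof: the identical decomposition $\vartheta^n,\eta^n,\rho^n$, the same test functions, monotonicity via \eqref{Monoprop} with the weight $\omega^n$, the bound \eqref{mi1a} (respectively \eqref{mi1aa} with Young's inequality and $(\omega^n)^{-1}\le C\mathcal A^\lambda$ for the improved rate), the truncation bound $\norm{\epsilon^n}\le C\Delta t$, and summation plus discrete Gronwall with $\vartheta^0=0$. Your flagged concern about needing a fully discrete analogue of the a priori bounds of Theorem \ref{pspt} to control $\omega^n$ uniformly in $n$ is well taken --- the paper invokes \eqref{invome} without explicitly proving such discrete bounds --- so on that point your write-up is actually more careful than the published argument.
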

\begin{proof} 
Subtracting \eqref{fullydiscreteform} from \eqref{fulprjsys}, in the resultants using $w_h= \vartheta^n, \zz_h = \eta^n,\vv_h= \rho^n$  we obtain 
\begin{subequations}
\begin{align}
&\label{b1} \left( \frac{\vartheta^n- \vartheta^{n-1}}{\Delta t}, \vartheta^n \right) +  \left(\nabla\cdot \rho^n, \vartheta^n\right) =(\epsilon^n,\vartheta^n), \\
&\label{b2} (\rho^n, \eta^n )  + \left( K(|\s^n_h| )\s^n_h -K(|\s^n|) \s^n ,  \eta^n \right)=0,\\ 
&\label{b3} (\eta^n,\rho^n)  + ( \vartheta^n , \nabla\cdot \rho^n )= 0.
\end{align}
\end{subequations} 
Combining \eqref{b1}--\eqref{b3} gives 
\beqs
\norm{ \vartheta^n}^2 + \Delta t \left( K(|\s^n_h| )\s^n_h -K(|\s^n|) \s^n ,  \eta^n \right) = ( \vartheta^n, \vartheta^{n-1}  )+\Delta t(\epsilon^n,\vartheta^n).
\eeqs
This equation is equivalent to  
\beq\label{c11}
\begin{aligned}
&\norm{ \vartheta^n}^2 + \Delta t \left( K(|\s^n_h| )\s^n_h -K(|\s^n|) \s^n ,  \s^n_h-\s^n \right) \\
&\quad\quad= (\vartheta^n, \vartheta^{n-1}  )+ \Delta t\Big\{ \left( K(|\s^n_h| )\s^n_h -K(|\s^n|) \s^n ,  \zeta^n \right)+(\epsilon^n,\vartheta^n)\Big\}.
\end{aligned}
\eeq
The second term of \eqref{c11}, using \eqref{Mono}, is bounded  : 
\beq\label{Wssh}
\left( K(|\s^n_h| )\s^n_h -K(|\s^n|) \s^n ,  \s^n_h-\s^n \right) \ge C\omega^n  \norm{\s^n_h -\s^n}^2_{L^{\beta}(\Omega)},
\eeq
where $\omega^n =\omega (t_n).$

The right hand side of \eqref{c11} using Cauchy's inequality and \eqref{mi1a} and \eqref{invome} give   
\begin{multline} \label{RHSc11}
(\vartheta^n, \vartheta^{n-1}  )+ \Delta t\left( \left( K(|\s^n_h| )\s^n_h -K(|\s^n|) \s^n ,  \zeta^n \right)+(\epsilon^n,\vartheta^n)\right)\\
\le \frac 12 \left(\norm{\vartheta^n}^2 +\norm{\vartheta^{n-1}}^2   \right)+
\Delta t\Big\{C\mathcal A\norm{\zeta^n}_{L^{\beta}(\Omega)} +\frac12\left(\norm{\vartheta^n}^2 +\norm{\epsilon^n}^2\right) \Big\}.
\end{multline}
It follows from \eqref{c11}, \eqref{Wssh} and \eqref{RHSc11} that   
\beqs
\norm{ \vartheta^n}^2 - \norm{\vartheta^{n-1}}^2+ C\Delta t\omega^n \norm{\s^n_h-\s^n}^2_{L^{\beta}(\Omega)} \le\Delta t \norm{ \vartheta^n}^2 + C \Delta t \left(\mathcal A \norm{\zeta^n}_{L^{\beta}(\Omega)}+\norm{\epsilon^n}^2\right). 
\eeqs
Summing  over $n$
\begin{align*}
(1-\Delta t)\norm{ \vartheta^m}^2 &+ C \sum_{n=1}^m \Delta t  \omega^n\norm{\s^n_h-\s^n}^2_{L^{\beta}(\Omega)}\\
 &\le\sum_{n=1}^{m-1} \Delta t \norm{ \vartheta^n}^2  
+ C \sum_{n=1}^m \Delta t \left(\mathcal A  \norm{\zeta^n}_{L^{\beta}(\Omega)}+\norm{\epsilon^n}^2\right)
\end{align*}
for some $m=2,\ldots, N.$

Dropping the nonnegative term of the left hand side, using Gronwall's lemma, we obtain   
 \beq\label{thetam}
\norm{\vartheta^m}^2 \le C\sum_{n=1}^m\Delta t \left(\mathcal A \norm{\zeta^n}_{L^{\beta}(\Omega)}+\norm{\epsilon^n}^2\right). 
\eeq
The triangle inequality gives 
 \beqs
 \norm{ p^m_h- p^m}^2 \le C\mathcal A\sum_{n=1}^m\Delta t  
\norm{\zeta^n}_{L^{\beta}(\Omega)}
+ \norm{\theta^m}^2+C(\Delta t)^2. 
\eeqs
This and properties of projections lead to \eqref{fulerr2} true.

(ii) We prove the superconvergence by estimate the right hand side of \eqref{c11} using Cauchy's inequality, \eqref{mi1aa} and \eqref{invome} to obtain    
\begin{multline}\label{RHSc}
(\vartheta^n, \vartheta^{n-1}  )+ \Delta t\left( \left( K(|\s^n_h| )\s^n_h -K(|\s^n|) \s^n ,  \zeta^n \right)+(\epsilon^n,\vartheta^n)\right)\le \frac 12 \left(\norm{\vartheta^n}^2 +\norm{\vartheta^{n-1}}^2   \right)
\\+\Delta t\left\{\varep\omega^n\norm{\s^n-\s_h^n}_{L^{\beta}(\Omega)}^2 +C_1(\varep\omega^n) ^{-1}\norm{\zeta^n}_{L^{\delta}(\Omega) }^2 +\frac12\left(\norm{\vartheta^n}^2 +\norm{\epsilon^n}^2\right) \right\}.
\end{multline}
Now we combine \eqref{Wssh}, \eqref{c11} and \eqref{RHSc} to have   
\begin{align*}
\norm{ \vartheta^n}^2 - \norm{\vartheta^{n-1}}^2+ C\Delta t\omega^n \norm{\s^n_h-\s^n}^2_{L^{\beta}(\Omega)} \le\Delta t \norm{ \vartheta^n}^2 + 2\varep \Delta t\omega^n
\norm{\s^n_h-\s^n}^2_{L^{\beta}(\Omega)}\\
+ C_1 \Delta t \left((\varep\omega^n)^{-1} \norm{\zeta^n}_{L^{\delta}(\Omega) }^2+\norm{\epsilon^n}^2\right). 
\end{align*}
Selecting $\varep =C/4$ we obtain  
\begin{align*}
\norm{ \vartheta^n}^2 - \norm{\vartheta^{n-1}}^2&+ C\Delta t\omega^n \norm{\s^n_h-\s^n}^2_{L^{\beta}(\Omega)}\\
 &\le\Delta t \norm{ \vartheta^n}^2 + C_2 \Delta t \left((\omega^n)^{-1} \norm{\zeta^n}_{L^{\delta}(\Omega) }^2+\norm{\epsilon^n}^2\right)\\
&\le\Delta t \norm{ \vartheta^n}^2 + C_2 \Delta t \left(\mathcal A^\lambda \norm{\zeta^n}_{L^{\delta}(\Omega) }^2+\norm{\epsilon^n}^2\right). 
\end{align*}
Now we drop the the nonnegative term in the left hand side in above inequality, sum over $n$ and use Gronwall's inequality to find that
\beqs
\norm{\vartheta^m}^2 \le C\sum_{n=1}^m\Delta t \left(\mathcal A^\lambda \norm{\zeta^n}_{L^{\delta}(\Omega) }^2+\norm{\epsilon^n}^2\right). 
\eeqs
Again using triangle inequality, properties of projections we obtain \eqref{fulerr2a}.
\end{proof}
\begin{theorem}\label{Derr}
Under the assumptions of Theorem \ref{fulErr}. There exists a positive constant $C$ independent of $h$ and $\Delta t$  such that if  the $\Delta t$ is sufficiently small then 
 
\beq\label{Dsuh0}
\norm{\s^m_h - \s^m}_{L^{\beta}(\Omega) }+\norm{\uu^m_h - \uu^m}_{L^{\beta}(\Omega)}\le C( h^{\frac{r+1}4}+ \sqrt{\Delta t} )
\eeq
for all $m=1,\dots, N$. 

Furthermore if $\s^n \in (W^{r+1,\delta}(\Omega))^d$ for all $n=1,\dots, N$ then  
\beq\label{Dsuh1}
\norm{\s^m_h - \s^m}_{L^{\beta}(\Omega)}+\norm{\uu^m_h - \uu^m}_{L^{\beta}(\Omega)}\le C( h^{\frac{r+1}2}+ \sqrt{\Delta t} ) 
\eeq
for all $m=1,\dots, N.$
\end{theorem}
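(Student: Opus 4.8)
The plan is to reproduce the semidiscrete gradient estimate of Theorem~\ref{suerr} at the fully discrete level, starting from the one-step error equation \eqref{c11} in place of the continuous identity \eqref{eqerr} and keeping the truncation term $\epsilon^m$. Fixing a level $m$ and dividing \eqref{c11} by $\Delta t$, I would isolate the monotone quantity in the discrete analog of \eqref{weightnorm},
\beqs
\left( K(|\s^m_h| )\s^m_h - K(|\s^m|) \s^m , \s^m_h - \s^m \right) = -\left( \frac{\vartheta^m - \vartheta^{m-1}}{\Delta t}, \vartheta^m\right) + \left( K(|\s^m_h| )\s^m_h - K(|\s^m|) \s^m , \zeta^m \right) + (\epsilon^m, \vartheta^m),
\eeqs
and bound the left-hand side below by $C\omega^m\norm{\s^m_h - \s^m}^2_{L^{\beta}(\Omega)}$ via the monotonicity \eqref{Wssh}.

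For part (i) I would estimate the three terms on the right exactly as in the semidiscrete argument together with the new truncation piece: the nonlinear term by $C\mathcal{A}\norm{\zeta^m}_{L^{\beta}(\Omega)}$ as in \eqref{mi1a}, the truncation term by $\norm{\epsilon^m}\,\norm{\vartheta^m}$ with $\norm{\epsilon^m}\le C\Delta t$, and the difference-quotient term by $\norm{\vartheta^m}\,\norm{(\vartheta^m-\vartheta^{m-1})/\Delta t}$. Using $\omega^{-1}\le C\mathcal{A}^{\lambda}$ from \eqref{invomega} and the pressure bound $\norm{\vartheta^m}^2\le C\sum_{n=1}^m\Delta t(\mathcal{A}\norm{\zeta^n}_{L^{\beta}(\Omega)}+\norm{\epsilon^n}^2)$ already established in \eqref{thetam}, the projection estimate \eqref{Bzeta} makes $\norm{\zeta^n}_{L^{\beta}(\Omega)}=O(h^{r+1})$ and $\norm{\epsilon^n}=O(\Delta t)$, so that $\norm{\vartheta^m}=O(h^{(r+1)/2}+\Delta t)$. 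Feeding this back, the right-hand side controls $\norm{\s^m_h-\s^m}^2$ by $O(h^{(r+1)/2}+\Delta t)$, and taking square roots gives the rate $h^{(r+1)/4}+\sqrt{\Delta t}$ of \eqref{Dsuh0}.

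The flux estimate follows as in Theorem~\ref{suerr}: testing the flux error equation \eqref{b2} with $\zz_h=(\rho^m)^{\beta-1}$ and applying \eqref{Lips} with H\"older gives $\norm{\rho^m}_{L^{\beta}(\Omega)}\le C\norm{\s^m_h-\s^m}_{L^{\beta}(\Omega)}$, so that the triangle inequality and the projection bound \eqref{Bvarrho} for $\varrho^m$ (a faster $O(h^{r+1})$ term) yield the stated bound for $\uu^m_h-\uu^m$. Part (ii) is identical except that the nonlinear term is estimated by the sharper Lipschitz inequality \eqref{mi1aa}, splitting off a term $\varep\,\omega^m\norm{\s^m_h-\s^m}^2$ to be absorbed and leaving $C\varep^{-1}\norm{\zeta^m}_{L^{\delta}(\Omega)}^2$; combined with the improved pressure bound from Theorem~\ref{fulErr}(ii), this upgrades the spatial order to $h^{(r+1)/2}$ and gives \eqref{Dsuh1}.

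I expect the main obstacle to be controlling the difference-quotient term $(\vartheta^m,(\vartheta^m-\vartheta^{m-1})/\Delta t)$, which has no analog in the summation argument of Theorem~\ref{fulErr} and which one cannot simply bound by $\norm{\vartheta^{m-1}}^2/\Delta t$, since the $1/\Delta t$ factor would then blow up. The fix is to prove a fully discrete counterpart of the time-derivative bounds in Theorems~\ref{phderv} and \ref{pspt}(iii), namely $\norm{(\vartheta^m-\vartheta^{m-1})/\Delta t}\le C\mathcal{B}^{1/2}$; writing $(\vartheta^m-\vartheta^{m-1})/\Delta t=(p^m_h-p^{m-1}_h)/\Delta t-\pi\bigl((p^m-p^{m-1})/\Delta t\bigr)$, one bounds the discrete pressure difference quotient by a discrete energy estimate and the projected exact difference quotient by $\sup_t\norm{p_t}\le C\mathcal{B}^{1/2}$ using the $L^2$-stability of $\pi$.
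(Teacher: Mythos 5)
Your proposal is correct and follows the paper's argument almost step for step: lower-bound the monotone term by $\omega^m\norm{\s^m_h-\s^m}^2_{L^\beta(\Omega)}$ via \eqref{Wssh}, bound the nonlinear pairing with $\zeta^m$ by \eqref{mi1a} in case (i) and by the Lipschitz estimate \eqref{mi1aa} with an $\varepsilon$-absorption in case (ii), invoke $\omega^{-1}\le C\mathcal A^\lambda$ together with the pressure bound \eqref{thetam} (respectively its improved version from part (ii) of Theorem \ref{fulErr}) to control $\norm{\vartheta^m}$, take square roots to get the rates $h^{(r+1)/4}+\sqrt{\Delta t}$ and $h^{(r+1)/2}+\sqrt{\Delta t}$, and recover the flux error by testing with $\zz_h=(\rho^m)^{\beta-1}$ and Lemma \ref{conts}, exactly as in \eqref{ubound}.

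The one genuine point of divergence is the time-derivative term, and you have correctly identified it as the crux. The paper avoids your difference quotient of $\vartheta$ by writing the exact solution's equation with $p_t^n$ (system \eqref{tem1}) instead of the truncated form \eqref{fulprjsys}, so its residual term is $\bigl((p_h^n-p_h^{n-1})/\Delta t - p_t^n,\, \vartheta^n\bigr)$ and no $\epsilon^n$ appears at this stage; it then bounds $(\Delta t)^{-1}\norm{p_h^n-p_h^{n-1}}$ by writing $p_h^n-p_h^{n-1}=\int_{t_{n-1}}^{t_n}p_{h,t}\,dt$ and invoking the semidiscrete bound \eqref{Bpht}, $\sup\norm{p_{h,t}}\le\mathcal B$ --- a step that is literally valid only if the fully discrete iterates coincide with time-evaluations of the semidiscrete solution, which they do not. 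Your proposed fix --- proving a genuinely fully discrete analogue of Theorems \ref{phderv} and \ref{pspt}(iii) for $\norm{(p_h^m-p_h^{m-1})/\Delta t}$ by differencing consecutive time levels and running a discrete energy estimate --- is the honest version of this step and is plausible, since the differenced nonlinear term $\bigl(K(|\s_h^n|)\s_h^n - K(|\s_h^{n-1}|)\s_h^{n-1},\, \s_h^n-\s_h^{n-1}\bigr)\ge 0$ has the right sign by \eqref{Mono} and the $t_0^{-1}$ weighting of Theorem \ref{phderv} discretizes in the standard way; but it is the one piece of your plan that remains a sketch rather than a proof, whereas everything else is routine given the paper's lemmas. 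Two minor remarks: since \eqref{Bpht} bounds $\norm{p_{h,t}}^2$ by $\mathcal B$, your $\mathcal B^{1/2}$ is actually the correct power (the paper writes $\mathcal B$ here, harmlessly but inconsistently); and your test function $(\rho^m)^{\beta-1}$ with $\beta-1\in(0,1)$ is not in fact a piecewise polynomial, so it does not lie in $\tilde W_h$ --- but this is an abuse you inherit directly from the paper's own proof, not a defect of your proposal relative to it.
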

\begin{proof}  Recall that the true solution satisfies the discrete equations 
\begin{subequations}\label{tem1}
\begin{align}
\label{te1}& \left( p_t^n, w_h\right)  +  \left(\nabla\cdot \Pi\uu^n, w_h\right) =(f^n, w_h)  , &&\forall w_h\in W_h\\
\label{te2}& (\Pi\uu^n, \zz_h)  + ( K(|\s^n| )\s^n ,\zz_h)=0, &&\forall \zz_h\in \tilde W_h,\\
 \label{te3} &(\pi\s^n,\vv_h)  + ( \pi p^n , \nabla\cdot  \vv_h )=0, &&\forall \vv_h\in V_h,
\end{align}
\end{subequations} 
Subtracting \eqref{fullydiscreteform} from \eqref{tem1}, 
choosing $w_h=\vartheta^n$, $\zz_h =\eta^n$, $\vv_h =\rho^n$, we obtain  
\begin{subequations}\label{tem4}
\begin{align}
&\label{te41}    \left( \frac{ p_h^n -  p_h^{n-1}}{\Delta t } -p_t^n, \vartheta^n\right) + \left(\nabla \cdot \rho^n, \vartheta^n\right) =0, \\
&\label{te42}     (\rho^n, \eta^n)  + ( K(|\s_h^n| )\s_h^n - K(|\s^n| )\s^n ,\eta^n )=0,\\
 &\label{te43}     (\eta^n,\rho^n)  + ( \vartheta^n , \nabla\cdot \rho^n )=0.
\end{align}
\end{subequations} 
Above equations yield   
\beq\label{erq}
\left( \frac{ p_h^n -  p_h^{n-1}}{\Delta t } -p_t^n, \vartheta^n\right) + \left( K(|\s_h^n| )\s_h^n - K(|\s^n| )\s^n ,\eta^n \right)=0.
\eeq
We use \eqref{Mn1}, \eqref{erq} to find that     
\beq\label{pior-est}
\begin{split}
\omega^n\norm{\s^n_h - \s^n}^2_{L^{\beta}(\Omega)} &\le \left( K(|\s^n_h| )\s^n_h - K(|\s^n|) \s^n ,  \s^n_h - \s^n \right)\\
&=\left( K(|\s^n_h| )\s^n_h - K(|\s^n|) \s^n ,  \eta^n \right)+ \left( K(|\s^n_h| )\s^n_h - K(|\s^n|) \s^n ,   \zeta^n \right)\\
& =-\left(\frac{p_{h}^n - p_h^{n-1}}{\Delta t} - p_t^n, \vartheta^n\right)+\left( K(|\s^n_h| )\s^n_h - K(|\s^n|) \s^n , \zeta^n \right).
\end{split}
\eeq
Due to \eqref{mi1a}, Cauchy-Schwartz and triangle inequality, one has  
\beqs
\omega^n\norm{\s^n_h - \s^n}^2_{L^{\beta}(\Omega)}\le C \left((\Delta t)^{-1} \norm{ p_{h}^n - p_h^{n-1}} +\norm{p_t^n} \right)\norm{\vartheta^n} + \mathcal A\norm {  \zeta^n}_{L^{\beta}(\Omega)}. 
\eeqs
Using the fact that
\begin{align*}
(\Delta t)^{-1}\norm{p_{h}^n - p_h^{n-1}} &=(\Delta t)^{-1}\norm{ \int_{t_{n-1}}^{t_n} p_{h,t} dt}\\
& \le (\Delta t)^{-1} \int_{t_{n-1}}^{t_n} \norm{p_{h,t}} dt\le \sup_{[T/N,T]}\norm{p_{h,t}}\le \mathcal B, 
\end{align*}
and 
\beqs 
\norm{p_t^n} \le \sup_{[T/N,T]}\norm{p_t}\le \mathcal B, 
\eeqs 
we obtain   
\beqs
\norm{\s^n_h - \s^n}^2_{L^{\beta}(\Omega)}
\le C \mathcal B(\omega^n)^{-1}\norm{\vartheta^n} + \mathcal A(\omega^n)^{-1}\norm {  \zeta^n}_{L^{\beta}(\Omega)}.
\eeqs
It follows from \eqref{thetam} and \eqref{invome} that
\beq\label{fDsh}
\begin{split}
\norm{\s^n_h - \s^n}^2_{L^{\beta}(\Omega)}
&\le C \mathcal B (\omega^n)^{-1}\left\{\sum_{i=1}^n\Delta t \left(\mathcal A \norm{\zeta^i}_{L^{\beta}(\Omega)}+\norm{\epsilon^i}^2\right)\right\}^{\frac 12} + \mathcal A (\omega^n)^{-1}\norm {\zeta^n}_{L^{\beta}(\Omega)}\\
&\le C  \mathcal A^\lambda \mathcal B \left\{ \left(\mathcal A\sum_{i=1}^n\Delta t  \norm{\zeta^i}_{L^{\beta}(\Omega)}\right)^{\frac 12}+\Delta t    \right\} + C\mathcal A^{\lambda+1} \norm {\zeta^n}_{L^{\beta}(\Omega)}.
\end{split}
\eeq
Thus 
\beq\label{Dsh}
\begin{split}
\norm{\s^n_h - \s^n}_{L^{\beta}(\Omega)}&\le C\mathcal A^{\frac \lambda 2+\frac 14 }\mathcal B^{\frac 12} h^{\frac{r+1}4}\left( \sum_{i=1}^n  \Delta t\norm{\s^i}_{r+1,\beta}d\tau\right)^{\frac 14}\\
& \quad+ C\mathcal A^{\frac \lambda 2+\frac 12 } h^{\frac{r+1}2}\norm {\s^n}_{r+1,\beta}^{\frac 12}
+ C \mathcal A^\lambda \mathcal B\sqrt{\Delta t}.
\end{split}
\eeq

The triangle inequality gives
\beqs
\norm{\uu^n_h -\uu^n}_{L^{\beta}(\Omega)}\le C( \norm{\rho^n}_{L^{\beta}(\Omega)} +\norm {\varrho^n}_{L^{\beta}(\Omega)}). 
\eeqs
Subtracting \eqref{fully2} from \eqref{te2} and using $\zz_h =(\rho^n) ^{\beta-1} $ we have equation  
 \beqs
 \left(\rho^n, (\rho^n)^{\beta-1}\right) + \left( K(|\s^n_h| )\s^n_h -K(|\s^n|) \s^n ,  (\rho^n)^{\beta-1} \right)=0.
 \eeqs 
 Then according Cauchy-Schwartz inequality and Proposition \ref{conts}, 
 \beqs
 \norm{ \rho^n}_{L^{\beta}(\Omega)} \le C\norm{\s^n_h -\s^n}_{L^{\beta}(\Omega)}.
 \eeqs
 Hence 
 \beq\label{ubound}
\norm{\uu^n_h -\uu^n}_{L^{\beta}(\Omega)}\le C( \norm{\s^n_h-\s^n}_{L^{\beta}(\Omega)} +\norm {\varrho^n}_{L^{\beta}(\Omega)}). 
\eeq
Using \eqref{Dsh} and \eqref{Bvarrho} yield  
\beq\label{Duh}
\begin{split}
\norm{\uu^n_h - \uu^n}_{L^{\beta}(\Omega)}&\le C\mathcal A^{\frac \lambda 2+\frac 14 }\mathcal B^{\frac 1 2}h^{\frac{r+1}4}\left( \sum_{i=1}^n  \Delta t\norm{\s^i}_{r+1,\beta}d\tau\right)^{\frac 12}\\
& \quad+ C\mathcal A^{\frac \lambda 2+\frac 12 } h^{\frac{r+1}2}\norm {\s^n}_{r+1,\beta}^{\frac 12}+Ch^{r+1}\norm {\uu^n}_{r+1,\beta}
+ C\mathcal A^\lambda  \mathcal B \sqrt{\Delta t}.
\end{split}
\eeq
Therefore \eqref{Dsuh0} follows from \eqref{Dsh} and \eqref{Duh}. 

(ii) Thank to the regularity of solution we bound the right hand side of \eqref{pior-est} using \eqref {mi1aa} instead of Cauchy-Schwartz inequality to obtain 
\beqs
\omega^n\norm{\s^n_h - \s^n}^2_{L^{\beta}(\Omega)}
\le C \mathcal B\norm{\vartheta^n} +  \varep \omega^n \norm{\s^n_h - \s^n}^2_{L^{\beta}(\Omega)}  +  C(\varep\omega^n )^{-1}\norm {  \zeta^n}_{L^{\delta}(\Omega)}^2.
\eeqs
or 
\beqs
\norm{\s^n_h - \s^n}^2_{L^{\beta}(\Omega)}
\le C \mathcal B(\omega^n)^{-1}\norm{\vartheta^n} +  \varep \norm{\s^n_h - \s^n}^2_{L^{\beta}(\Omega)}  +  C\varep^{-1}(\omega^n )^{-2}\norm {  \zeta^n}_{L^{\delta}(\Omega)}^2.
\eeqs
Selecting $\varep=\frac 12$, it follows from \eqref{thetam} and \eqref{invome} that 
\beqs
\norm{\s^n_h - \s^n}^2_{L^{\beta}(\Omega)}\le C \mathcal A^\lambda\mathcal B  \left\{\mathcal A^{\frac \lambda 2} \left(\sum_{i=1}^n\Delta t  \norm{\zeta^i}_{L^{\delta}(\Omega)}^2\right)^{\frac 12}+\Delta t    \right\} + C\mathcal A^{2\lambda} \norm {\zeta^n}_{L^{\delta}(\Omega)}^2.
\eeqs
Thus
\beqs
\begin{split}
\norm{\s^n_h - \s^n}_{L^{\beta}(\Omega)}&\le C\mathcal A^{\frac{3\lambda}4 }\mathcal B^{\frac 12}h^{\frac{r+1}4}\left( \sum_{i=1}^n  \Delta t\norm{\s^i}_{r+1,\delta}^2d\tau\right)^{\frac 14}\\
& \quad+ C\mathcal A^\lambda h^{\frac{r+1}2}\norm {\s^n}_{r+1,\delta}
+ C\mathcal A^{\frac \lambda 2} \mathcal B\sqrt{\Delta t}.
\end{split}
\eeqs
This and \eqref{ubound} give us \eqref{fulerr2a}. We finish the proof . 
\end{proof}

\section{Numerical results}
In this section, we give a simple numerical result illustrating the convergence
theory. We test the convergence of our method with the Forchheimer two term law. 
For simplicity, consider $g(s)=1+s$. Equation \eqref{eq4} $sg(s)=\xi,$ $s\ge0$ gives
 $ s= \frac {-1 +\sqrt{1+4\xi}}{2}$ and hence 
$$
K(\xi) =\frac1{g(s(\xi)) } =\frac {2}{1+\sqrt{1+4\xi}}.
$$ 
Since we analyze a first order time discretization, we consider the lowest order mixed method. Here we use the lowest order Raviart-Thomas mixed finite element on the unit square in two dimensions. The chosen analytical solution is 
\begin{align*}
p(x,t)&=e^{-5t}\left[\frac12 (x^2_1 +x^2_2)-\frac 13(x^3_1+x^3_2) \right],\\
\s(x,t) &=\nabla p=e^{-5t}( x_1(1-x_1), x_2(1-x_2) ), \\
\uu(x,t) &=K(|\s|)\s=\frac {2\s(x,t)}{1+\sqrt{1+4|\s(x,t)|}}
\end{align*}
 for all  $x\in\Omega,t\in [0,1]$ where $x=(x_1,x_2), \Omega=[0,1]^2$. The forcing term $f$ is determined accordingly to the analytical solution by equation $p_t - \nabla \cdot\uu = f$. Explicitly,  
\beqs
\begin{split}
 f(x,t)&=\frac12 (x^2_1 +x^2_2)-\frac 13(x^3_1+x^3_2)-\frac {4e^{-5t}(1-x_1-x_2)}{1+\sqrt{1+4|\s|}}\\
&+\frac{4e^{-15t}}{|\s| (1+\sqrt{1+4|\s|})^2 \sqrt{1+4|\s|}}\Big[  x_1^2(1-x_1)^2(1-2x_1)+ x_2^2(1-x_2)^2(1-2x_2) \Big].
\end{split}
\eeqs   

We used FEniCS \cite{fenics} to perform our numerical simulations. We divide the unit square into an $ N\times N$ mesh of squares, each then subdivide into two right triangles using the \textsf{UnitSquareMesh} class in FEniCS. For each mesh, we solve the generalized Forchheimer equation numerically. The error control in each nonlinear solve is $\varep =10^{-6}$.  Our problem is solved at each time level start at $t=0$ until final time $T = 1$. At this time, we measured the $L^2$-errors of pressure and $L^{\beta}$-errors of gradient of pressure and velocity. Here $\beta = 2 - a = 2- \frac{{\rm deg} (g)}{{\rm deg} ( g) + 1}=\frac 32$. The numerical results are listed as the following table.   
 
 \vspace{0.3cm}  
\begin{center}
\begin{tabular}{c|c| c|c| c|c|c}
N    & $\norm{p-p_h}$ & Rates & $\norm{\s-\s_h}_{L^{\beta}(\Omega)}$ &  Rates & $\norm{\uu-\uu}_{L^{\beta}(\Omega)}$ &  Rates \\
\hline
4& 1.965e-01 	&-& 2.505e-01&-&2.436e-01&-\\
8&1.011e-01	&1.94&2.523e-01&0.99&2.504e-01&0.97\\
16&5.081e-02&1.98&2.525e-01&0.99&2.517e-01&0.99 \\
32&2.542e-02&1.99&2.525e-01&1.00&2.519e-01&0.99\\
64&1.270e-02&2.00&2.524e-01&1.00&2.519e-01&1.00\\
128&6.351e-03&1.99&2.523e-01&1.00&2.519e-01&1.00\\
256&3.175e-03&2.00&2.521e-01&1.00&2.519e-01&1.00\\
\hline
\end{tabular}

\vspace{0.3cm}
Table 1. {\it Convergence study for generalized Forchheimer equation with zero flux on the boundary in 2D.}

\end{center}

\bibliographystyle{siam}
\def\cprime{$'$} \def\cprime{$'$}

\end{document}